\documentclass[a4paper,10pt]{article}
\usepackage{geometry}
\usepackage{amsthm}
\usepackage{amssymb}
\usepackage{amsmath}
\usepackage{xcolor}
\newtheorem{thm}{Theorem}[section]

\newtheorem{lemma}[thm]{Lemma}

\usepackage{cite}
\usepackage[colorlinks=true,anchorcolor=blue,filecolor=blue,linkcolor=red,
urlcolor=blue,citecolor=blue,hypertexnames=false]{hyperref}

\newcommand{\F}{\mathcal F}
\newcommand{\Hfun}{\mathcal H}

\begin{document}

\title{An exact formula for Erd\H{o}s' problem 1005}
\author{
Yanmohan Wang\thanks{Institute for Interdisciplinary Information Sciences, Tsinghua University, Beijing 100084, China. Email: {\tt wymh25@mails.tsinghua.edu.cn}}
\and 
Mingxu Xie\thanks{Qiuzhen College, Tsinghua University, Beijing 100084, China.
Email: {\tt xiemx24@mails.tsinghua.edu.cn}}
\and 
Ziyuan Zhao\thanks{School of Mathematical Sciences, University of Science and Technology of China, Hefei, Anhui 230026, China. Research supported by Innovation Program for Quantum Science and Technology 2021ZD0302902. Email: {\tt zyzhao2024@mail.ustc.edu.cn}}}
\date{}
\maketitle

\begin{abstract}
In 1943, Erd\H{o}s considered the minimum number $f(n)$ of terms between two fractions in the Farey sequence of order $n$ whose numerators and denominators are oppositely ordered. Determining the constant $c$ in $f(n)=(c+o(1))n$ is known as Erd\H{o}s Problem 1005. Recently, Cipollini solved this asymptotic problem by proving that $f(n)=(1/4+o(1))n$. Following his framework, we give an analytic proof of an exact formula for $f(n)$ for all sufficiently large $n$. Combining this with a finite computer verification, we further determine $f(n)$ for every integer $n\geq 4$.
\end{abstract}

\section{Introduction}

For a positive integer $n$, let the Farey sequence of order $n$ be
$\F_n=(a_0/b_0,a_1/b_1,\cdots)$, the increasing sequence of all reduced
fractions $a/b\in[0,1]$ with $b\le n$.
For $n\ge4$, define
\begin{equation}\label{eq:f-definition}
 f(n):=\min\left\{j-i-1:
 0\le i<j<|\F_n|, 
 (a_j-a_i)(b_j-b_i)<0\right\}.
\end{equation}

In 1942, Mayer initiated the study of the function
$f(n)$ and proved that $f(n)\to\infty$ \cite{Mayer1,Mayer2}.  In 1943,
Erd\H{o}s proved the first linear lower bound,
$f(n)\ge n/400-O(1)$~\cite{Erdos}.  The related problem of determining
whether there exists a constant $c>0$ satisfying $f(n)=(c+o(1))n$ and,
if so, determining its value is Erd\H{o}s Problem 1005
\cite{Bloom}.  In 2025, van Doorn improved the lower
bound to $f(n)\ge(1/12-o(1))n$ \cite{vanDoorn}.  He also gave explicit
constructions yielding the following four upper bounds.
For every integer $n\ge4$, write $n=4m+r$, where $m\ge1$ and
$r\in\{0,1,2,3\}$, and define
\[
 U(n):=
 \begin{cases}
  m+1,&r=0,\\
  m+2,&r\in\{1,2\},\\
  m+4,&r=3.
 \end{cases}
\]

\begin{thm}[van Doorn~{\cite[Theorem~1]{vanDoorn}}]\label{thm:vandoorn-upper}
For every integer $n\geq 4$, $f(n)\leq U(n)$.
\end{thm}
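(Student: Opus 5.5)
The plan is to exhibit, for each residue class $r$, an explicit pair of reduced fractions $a/b<c/d$ in $\F_n$ with $c>a$ and $d<b$, and then count exactly how many terms of $\F_n$ lie strictly between them. Since $a/b<c/d$ forces $(a_j-a_i)(b_j-b_i)<0$ to mean ``numerator strictly up, denominator strictly down'', every admissible pair has this shape. I will therefore look for $a/b$ with $b$ close to $n$ and $c/d$ with $d$ noticeably smaller but $c$ slightly larger.

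To get a gap that is linear in $n$ (rather than quadratic or constant), the two fractions should be at distance of order $1/n$. I would take them close to a rational $p/q$ with small $q$, approached from the two sides. Concretely, I would first search small $n$ by hand or computer for pairs attaining $U(n)$, and read off a family of the form $a/b=(\alpha m+\beta)/(\gamma m+\delta)$, $c/d=(\alpha' m+\beta')/(\gamma' m+\delta')$ with $n=4m+r$. I expect the denominators to be roughly $n$ and $n-O(1)$ or $n/2$, and the numerators to be of size about $n/4$ so that $c-a\ge 1$. The four cases $r=0,1,2,3$ will need slightly different offsets; this explains the different additive constants in $U(n)$, and why $r=3$ costs more (parity or divisibility obstructions force a worse pair). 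For each family I will check $\gcd(a,b)=\gcd(c,d)=1$ directly, using that consecutive-type linear forms in $m$ have gcd dividing a small constant, and that $b,d\le n$.

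For the counting step I will use the standard Farey structure. If $h/k<h'/k'$ satisfy $h'k-hk'=1$, then every fraction strictly between them is $(sh+th')/(sk+tk')$ with $s,t\ge1$ coprime. More generally, I will decompose the interval $(a/b,c/d)$ at the rational $p/q$ it straddles, together with the Farey neighbours of the endpoints, into a chain of unimodular intervals. In each unimodular piece the fractions of order $\le n$ are exactly the lattice points $(s,t)$ with $s,t\ge1$, $\gcd(s,t)=1$ and $sk+tk'\le n$. Because the endpoints are chosen so that one of $k,k'$ is close to $n$, only $s=1$ (or $t=1$) survives in each piece. The count then reduces to summing the lengths of a few arithmetic progressions, which should come out to exactly $m+1$, $m+2$ or $m+4$; any overcount would only help, since we need just $f(n)\le$ this number.

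The main obstacle is the first step: actually pinning down the right explicit families. The counting is routine once the endpoints are fixed, but the choice of endpoints is delicate, especially for $r=3$. There the natural candidate has a non-reduced endpoint or violates $d<b$, and a shifted pair must be used. My first attempt will be to place the pair around $1/3$ or $1/4$, with $b\in\{n,n-1\}$, and to adjust by one in numerator or denominator according to $r$. If a family fails for small $m$, the finitely many exceptional $n\ge4$ will be handled by direct inspection of $\F_n$.
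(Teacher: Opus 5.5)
The overall shape of your plan is right: exhibit an admissible pair $a/b<c/d$ with $c>a$, $d<b$, then bound the number of terms of $\F_n$ strictly between them. The paper does not reprove this statement; it quotes van Doorn's explicit constructions. The gap is that you never produce the pair, and that construction is the whole content of the theorem. Worse, your concrete guesses point to the wrong place: a pair near $1/3$ or $1/4$, with numerators about $n/4$, cannot work for large $n$. Since $c\ge a+1$ and $d\le b-1$, the interval $(a/b,c/d)$ always contains $(a/b,(a+1)/(b-1))$. Theorem~\ref{thm:general} shows the latter contains at least $5n/18-O(\sqrt n\log n)$ Farey fractions, which exceeds $U(n)\le n/4+4$, unless $b-2a\in\{1,2\}$, i.e.\ unless $a/b<\tfrac12<\tfrac{a+1}{b-1}$. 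For instance, with $a/b=\tfrac{s}{3s+1}$ and $b\approx n$, both the fractions $\tfrac{x}{3x-1}$ and the fractions $\tfrac{x}{3x-2}$ enter the window, about $\tfrac{2n}9+\tfrac n{18}=\tfrac{5n}{18}$ in all. So the family a search would find must straddle $1/2$, with numerators about $n/2$. The reason is the identity $|x/y-\tfrac12|=|2x-y|/(2y)$. Fractions with $|2x-y|=1$ lie at distance $1/(2y)$, and all others at distance at least $1/y\ge 1/n$; moreover $|2x-y|=2$ forces $4\mid y$ for a reduced fraction. Hence a window of width $O(1/n)$ around $1/2$ sees only $1/2$ and essentially one family $s/(2s\pm1)$, which numbers about $n/4$.

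Concretely, for $n=4m$ take $\frac{2m-1}{4m}<\frac{2m}{4m-1}$. For $n=4m+r$ with $r\in\{1,2,3\}$ take $\frac{2m}{4m+1}<\frac{2m+1}{4m}$. These pairs give the windows $\bigl(\tfrac12-\tfrac1{4m},\tfrac12+\tfrac1{2(4m-1)}\bigr)$ and $\bigl(\tfrac12-\tfrac1{2(4m+1)},\tfrac12+\tfrac1{4m}\bigr)$. The identity above shows that the terms strictly between are exactly
\[
 \tfrac12,\ \tfrac{s}{2s+1}\ (m\le s\le 2m-1)\quad (r=0),
 \qquad
 \tfrac12,\ \tfrac{s}{2s-1}\ \bigl(m+1\le s\le \lfloor\tfrac{n+1}2\rfloor\bigr)\quad (r=1,2,3),
\]
together with $\tfrac{2m+1}{4m+3}$ when $r=3$. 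This gives $m+1$, $m+2$, $m+2$, $m+4$ terms respectively. For $r=3$ one needs $m\ge2$ to exclude fractions with $|2x-y|=3$; for example, $n=7$ is instead covered by the pair $\tfrac16<\tfrac25$, with four terms between. These are exactly the sets in the tables of Theorem~\ref{thm:central-cases}, which is how the paper obtains the matching lower bounds. With this choice your lattice-point decomposition is unnecessary: the count is a short distance-to-$\tfrac12$ check. The step you describe as ``only $s=1$ survives'' comes from the narrowness of the window around $1/2$, not from an endpoint having denominator close to $n$.
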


In the same paper, van Doorn computed $f(n)$ for $n\le5000$. Equality in
Theorem~\ref{thm:vandoorn-upper} holds for every
$92\le n\le5000$, whereas $91$ is the largest integer $n<5000$ for which
equality fails.  He therefore conjectured that equality holds for every
$n\ge92$.  Very recently, Cipollini proved $f(n)=(1/4+o(1))n$, thereby
determining the asymptotic constant in Erd\H{o}s Problem 1005
\cite{Cipollini}. In this paper, we determine $f(n)$ exactly for every integer
$n\ge4$. In particular, van Doorn's conjecture
hold for all sufficiently large $n$.

\begin{thm}\label{thm:main}
For all sufficiently large integers $n$, $f(n)=U(n)$.
\end{thm}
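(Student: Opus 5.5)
Since Theorem~\ref{thm:vandoorn-upper} already gives $f(n)\le U(n)$, it remains to prove $f(n)\ge U(n)$ for all large $n$. Equivalently, if $a_i/b_i<a_j/b_j$ in $\F_n$ with $a_j>a_i$ and $b_j<b_i$ (the only opposite ordering possible, since $a_j\ge a_i$ automatically when $b_j<b_i$), then at least $U(n)$ fractions of $\F_n$ lie strictly between them. I would follow Cipollini's framework.

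\textbf{Reduction.} Write $x=a_i/b_i$ and $y=a_j/b_j$. Since no fraction of $\F_n$ lies strictly between two Farey neighbours, and for large $n$ the required gap is about $n/4$, the interval $(x,y)$ must contain about $n/4$ Farey fractions. The number of such fractions equals
\[
N(x,y)=\sum_{q\le n}\#\{p:\ x<p/q<y,\ \gcd(p,q)=1\},
\]
and the interval length is $y-x\ge 1/(b_ib_j)$. The key observation is that $b_j<b_i$ forces $y$ to have the smaller denominator. Near $y$ with $b_j=q_0$, the Farey fractions accumulate in Stern--Brocot chains $(a_j k+u)/(b_j k+v)$. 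Counting these chains, together with the fractions near $x$, gives a lower bound of roughly $n/b_j$ terms from the mediants between $x$ and $y$ alone.

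\textbf{Case split by size of denominators.}
\begin{itemize}
\item If $b_j$ (or the relevant mediant structure) is not tiny, I would use a crude lower bound on $N(x,y)$. This is the asymptotic part of Cipollini: $N\gtrsim \frac{3}{\pi^2}n^2(y-x)$ plus error terms, which exceeds $n/4+O(1)$ with room to spare unless $y-x=O(1/n)$.
\item In the remaining structured case, $y-x\asymp 1/n$ and the denominators are comparable to $n$. Here I would parametrize explicitly. Take $x=a/b$ and $y=c/d$ with $d<b\le n$ and $c>a$, and set $\Delta=cb-ad\ge1$. Using the unimodular change of coordinates given by the Farey neighbour of $y$, the fractions in $(x,y)$ correspond to lattice points $(k,l)$ with $k,l\ge1$ in a triangle-like region cut out by $q\le n$.
\end{itemize}
Counting the primitive lattice points exactly in this region yields an explicit function of $(b,d,\Delta,n)$. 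Minimizing it over admissible parameters should give exactly $m+1$, $m+2$ or $m+4$ according to $n\bmod 4$, with the minimizers matching van Doorn's constructions.

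\textbf{Main obstacle.} The hard step is the exact minimization in the structured regime. Near-extremal configurations have $b\approx 2d$ and $\Delta$ small (e.g.\ $\Delta\in\{1,2\}$), and the count there involves floor functions of $n/b$ and $n/d$. I would first prove that any configuration with $\Delta\ge3$, or with $b/d$ bounded away from $2$, has count at least $n/4+cn$, by a lattice-point estimate. The finitely many local families, indexed by small $\Delta$ and small offsets $b-2d$, can then each be counted by hand. The $r=3$ case, where the answer jumps to $m+4$, requires showing that no family attains $m+2$ or $m+3$; I expect this to need the most careful parity and congruence bookkeeping. Non-primitive lattice points must be excluded too; in the near-extremal chains they correspond to only $O(1)$ terms, which can be checked directly.
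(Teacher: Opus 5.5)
Your outline has the right outer shape (van Doorn for $f(n)\le U(n)$, a density bound when the gap is long, explicit counting near the extremal pairs). However, the step that carries all of the difficulty is left as ``a lattice-point estimate'', and the structure you plan to exploit is misidentified. First, you never make the reduction that turns this into a one-parameter problem. Since $c\ge a+1$ and $d\le b-1$, you have $c/d\ge(a+1)/(b-1)$, so it suffices to bound $|\F_n\cap(a/b,(a+1)/(b-1))|$ from below, a quantity depending on $a/b$ alone. Second, your near-extremal regime cannot occur: $\Delta=cb-ad\ge(a+1)b-a(b-1)=a+b$, so $\Delta\in\{1,2\}$ is impossible. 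In van Doorn's extremal pairs one has $d=b-1$ (not $b\approx2d$) and $b\approx2a$. The correct dichotomy is $b-2a\in\{1,2\}$ versus $b-2a\notin\{1,2\}$, and the former gives three nested one-parameter families with left endpoints $(2t-1)/(4t)$, $2t/(4t+1)$, $(2t+1)/(4t+3)$. Third, the number of primitive lattice points in your triangle is not an explicit function of $(b,d,\Delta,n)$, so there is no formula to minimize.

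The genuine gap is the generic case $b>n/4$, $b-2a\notin\{1,2\}$, which contains almost every left endpoint. There the interval has length $\asymp1/n$, and density bounds of the form $\frac{3}{\pi^2}n^2(y-x)$, or Dress's theorem, have errors of order $n$, the same order as the target. Worse, the count is not governed by area. For $n=4m$, $a/b=(2m-1)/(4m)$, $c/d=2m/(4m-1)$, the triangle has area about $3n/4$, so $\frac{6}{\pi^2}$ times the area is about $0.46n$, while the true count is $m+1\approx n/4$. Coordinates built from the Farey neighbour of $y$ do not fix this: for typical $y$ the region meets order $n$ lattice lines in either coordinate direction, and $O(1)$ errors per line add up to order $n$.

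The missing idea is to slice along a direction adapted to $a/b$. Take $p/q\in\F_{\lfloor\sqrt n\rfloor}$ with $|qa/b-p|\le1/\lfloor\sqrt n\rfloor$ and use the lines $h=qx-py$, so that only $O(\sqrt n)$ values of $h$ occur. M\"obius inversion on each line gives $\frac{\varphi(|h|)}{|h|}\operatorname{length}(I_h)$ with total error $O(\sqrt n\log n)$. The main term $\frac bq\Hfun_{n/b}(A,A+p+q)$, with $A=aq-bp$, is then expressed through $G$. The bounds $G(x+1)-G(x)\ge5/18$, $G(x)\ge5x/18$ for $x\ge3$, and $G(x)\ge x/4$ for $x\ge2$ give a count of at least $5n/18-O(\sqrt n\log n)>n/4+4$ for large $n$. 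The only configuration escaping this is $q=2$, $A\in\{-1,-2\}$, i.e.\ exactly $b-2a\in\{1,2\}$.

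Finally, $r=3$ is not a matter of congruence bookkeeping. For the single exceptional endpoint $(2m+1)/(4m+3)$ you need that $(a+1)/(b-1)=(2m+2)/(4m+2)$ is not reduced. This forces $c/d>(2m+1)/(4m)$ and supplies the fractions giving $m+4$.
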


The proof of Theorem~\ref{thm:main} follows the analytic framework developed
in~\cite{Cipollini}.  The following theorem, proved with computer assistance,
determines $f(n)$ exactly for every integer $n\ge4$.

\begin{thm}\label{thm:all-n}
Let $\mathcal E:=\{7,9,11,15,19,23,25,27,31,35,39,49,51,63,91\}.$
Then, for every integer $n\ge4$,
\[
 f(n)=
 \begin{cases}
  U(n)-2,&n\in\{15,27\},\\
  U(n)-1,&n\in\mathcal E\setminus\{15,27\},\\
  U(n),&n\notin\mathcal E.
 \end{cases}
\]
\end{thm}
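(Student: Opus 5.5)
Theorem~\ref{thm:all-n} follows from Theorem~\ref{thm:main} combined with a finite computation, so the plan has two parts.

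\textbf{Step 1: make Theorem~\ref{thm:main} effective.} We will rerun the analytic proof of Theorem~\ref{thm:main} and keep every constant explicit. That proof follows Cipollini's framework and yields $f(n)\ge U(n)$ for $n\ge N_0$. Concretely, suppose $a_i/b_i<a_j/b_j$ are oppositely ordered at Farey distance $j-i-1<U(n)$. We then
\begin{itemize}
\item[(i)] count the Farey fractions strictly between them by a sum over denominators $q\le n$ of lattice-point counts in the interval $(a_i/b_i,a_j/b_j)$;
\item[(ii)] bound the error terms (from M\"obius inversion and floor-function fluctuations) with explicit constants;
\item[(iii)] show that, except for finitely many explicit configurations, the count exceeds $U(n)$ once $n\ge N_0$.
\end{itemize}
The main obstacle is making $N_0$ small enough, ideally a few thousand, that it falls within the computed range. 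For this we will use sharp explicit bounds, such as $\sum_{d\le x}\mu(d)/d$ bounded by $1$ and explicit estimates for $\varphi$-sums in short intervals. We will also treat the near-extremal configurations, which are essentially van Doorn's constructions, by an exact case analysis rather than by asymptotics. If $N_0$ still exceeds $5000$, we will extend the computation rather than the analysis.

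\textbf{Step 2: finite verification for $4\le n<N_0$.} For each such $n$ we compute $f(n)$ exactly. A direct scan of all pairs is quadratic in $|\F_n|\asymp n^2$, which is too expensive. Instead we slide a window over $\F_n$ and use the fact, by Theorem~\ref{thm:vandoorn-upper}, that only gaps $j-i-1\le U(n)$ matter. So for each $i$ we check only $j\le i+U(n)+1$, which costs $O(n^3)$ per $n$ with fast Farey generation via the next-term recurrence. For each $n$ we record the minimal gap and compare it with $U(n)$. This confirms the listed exceptions: $f=U-2$ at $n=15,27$, $f=U-1$ on the rest of $\mathcal E$, and equality elsewhere, consistent with van Doorn's data up to $5000$.

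\textbf{Combining.} For $n\ge N_0$ we have $U(n)\le f(n)\le U(n)$, and none of these $n$ lie in $\mathcal E$, so $f(n)=U(n)$. For $n<N_0$ the computation gives the stated values. Together these prove the formula for all $n\ge4$.
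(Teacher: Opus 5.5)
\textbf{Verdict: there is a genuine gap.} Your two-part plan (an effective version of Theorem~\ref{thm:main} plus a finite check) has the right overall shape, and your Step~2 matches what the paper does for $4\le n\le5000$. The problem is that the two steps do not meet.

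\textbf{The analytic threshold is in the millions.} Two reductions are already effective: $b\le n/4$ via Theorem~\ref{thm:dress}, and $b-2a\in\{1,2\}$ via Theorem~\ref{thm:central-cases}. In the remaining case the main term can genuinely be as small as $5n/18$. For example, take $(p,q,A)=(1,3,-1)$ with $b$ close to $n$, where $G(t)+G(3t)\approx 5/6$. So you must beat $n/4+4$ with a margin of only about $n/36$. The error you must absorb is of order $\sqrt n\log n$, coming from the divisor sums over the $O(\sqrt n)$ lines $qx-py=h$. With explicit constants, the paper's error bound is $(9\lfloor\sqrt n\rfloor+3)\bigl(\tfrac{61}{100}\log(9\lfloor\sqrt n\rfloor+3)+\tfrac76\bigr)$. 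This falls below $n/36-4$ only for $n\ge5504798$. Tools such as $|\sum_{d\le x}\mu(d)/d|\le1$ or short-interval $\varphi$-estimates do not change the order of this error. Nothing in your plan brings $N_0$ down to a few thousand.

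\textbf{Your fallback is infeasible at that scale.} Your enumeration costs $\asymp n^3$ per $n$, and even listing $\F_n$ costs $\asymp n^2$. Covering every $n$ up to about $5.5\cdot10^6$ would take roughly $10^{25}$ operations (respectively $10^{19}$).

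\textbf{The missing idea is the intermediate range $5000\le n\le5504797$, which the paper handles without enumerating $\F_n$.} For $b>n/4$ and $b-2a\notin\{1,2\}$ it keeps the decomposition of Lemma~\ref{lem:determinant-count}, with main term $\frac bq\Hfun_{n/b}\bigl(A,\frac{b(A+p+q)}{b-1}\bigr)$ evaluated in terms of $(p,q,A)$, and then:
\begin{itemize}
\item it replaces the crude $O(\sum_{d\mid h}1)$ by the exact worst-case discrepancy of coprime counts in intervals of length at most $|h|$, computable from one period modulo $|h|$;
\item it uses sharper increments of $G$: $2/7$ for $x\ge1$, $3/10$ for $x\ge41$, and $G(x+q)-G(x)\ge\max\{2q/7,(21273q-22546)/70000\}$;
\item it tunes the Dirichlet parameter, using $\lfloor\sqrt{n/3}\rfloor$ with minimal $q$, or $\lfloor\sqrt{n/8}\rfloor$;
\item it checks the two tight cases $(p,q,A)\in\{(1,3,-1),(1,3,-3)\}$ directly.
\end{itemize}
The work then scales with the parameters $(p,q,A)$ and the $O(\sqrt n)$ values of $h$, not with $|\F_n|$.

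\textbf{Your item (i) also needs replacing.} As written, it would not even give Theorem~\ref{thm:main}. Counting lattice points denominator by denominator incurs an $O(1)$ error per denominator, so $O(n)$ in total, which is the same order as the main term. This is exactly why Theorem~\ref{thm:dress} only suffices when $b\le n/4$. The $O(\sqrt n\log n)$ error comes from slicing along the lines $qx-py=h$ adapted to a Dirichlet approximant $p/q$ with $q\le\sqrt n$. Any effective version has to be built on that decomposition.
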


The main focus of this paper is the analytical proof of
Theorem~\ref{thm:main}.  A proof sketch of Theorem~\ref{thm:all-n}, together
with the code for its verification, is given in
Section~\ref{sec:5}.

\section{Preliminaries}\label{sec:2}
In this section, we introduce several auxiliary functions and establish their basic properties for later use.
We use the following notation throughout the paper.  For a positive integer
$m$, let $\varphi(m):=|\{1\le r\le m:\gcd(r,m)=1\}|$ be Euler's totient
function, and let $\Phi(m):=\sum_{r=1}^m\varphi(r)$. Thus
$\Phi(m)=|\F_m\setminus \{0\}|$.

\begin{lemma}\label{lem:totient}
For every integer $m\ge1$,
\begin{equation}\label{eq:totient}
 \Phi(m)\ge\frac27m(m+1).
\end{equation}
\end{lemma}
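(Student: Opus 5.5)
We need $\Phi(m)\ge \tfrac27 m(m+1)$ for all $m\ge 1$. The plan is to split into an explicit bound valid for large $m$ and a finite check for small $m$. The natural tool is the classical identity
\[
 \sum_{d\le m}\Phi\!\left(\left\lfloor m/d\right\rfloor\right)=\frac{m(m+1)}{2}.
\]
It holds because every pair $1\le a\le b\le m$ is uniquely $(da',db')$ with $\gcd(a',b')=1$ and $b'\le m/d$.

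Möbius inversion of this identity gives the exact formula
\[
 \Phi(m)=\frac12\sum_{d\le m}\mu(d)\left\lfloor \frac md\right\rfloor\left(\left\lfloor \frac md\right\rfloor+1\right).
\]
Writing $\lfloor m/d\rfloor=m/d-\theta_d$ with $0\le\theta_d<1$, one has $\lfloor m/d\rfloor(\lfloor m/d\rfloor+1)\ge m^2/d^2-m/d$. It is cleaner, though, to bound the deviation directly:
\[
 \left|\lfloor x\rfloor(\lfloor x\rfloor+1)-x^2\right|\le x \quad\text{for } x\ge 0.
\]
Indeed, the left-hand expression equals $\lfloor x\rfloor^2+\lfloor x\rfloor-x^2$, which lies in $[-x,\,x]$. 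The first step therefore gives
\[
 \Phi(m)\ge \frac{m^2}{2}\sum_{d\le m}\frac{\mu(d)}{d^2}-\frac m2\sum_{d\le m}\frac1d
 \ge \frac{m^2}{2}\Bigl(\frac{6}{\pi^2}-\frac1m\Bigr)-\frac m2(1+\log m).
\]
Here the tail of $\sum_d\mu(d)/d^2$ is bounded by $\sum_{d>m}1/d^2<1/m$. Since $3/\pi^2\approx0.30396$ exceeds $2/7\approx0.28571$ by about $0.01825$, the inequality $\Phi(m)\ge \tfrac27m^2+\tfrac27m$ follows once
\[
 0.0182\,m^2 \ge \tfrac12 m(2+\log m)+\tfrac27 m,
\]
that is, once $0.0182\,m\ge 1.29+0.5\log m$. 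This holds for all $m\ge$ roughly $400$; I would verify the exact threshold $M_0$ numerically.

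The remaining, somewhat tedious, step is the range $1\le m<M_0$. I expect this finite check to be the main obstacle in writing, though not in substance. I would first try to shrink $M_0$ with a sharper error term, for instance by using $\theta_d$ more carefully or the known bound $|\Phi(m)-3m^2/\pi^2|\le m\log m$ for $m\ge 2$. Then I would verify directly, by computer or by a table, that $\Phi(m)\ge\frac27m(m+1)$ for small $m$. The tightest cases appear to be tiny ones: $m=1$ gives $1\ge 4/7$, $m=2$ gives $2\ge 12/7$, $m=3$ gives $4\ge 24/7$, $m=4$ gives $6\ge 40/7$, and $m=6$ gives $12\ge 12$, which is equality. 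That equality at $m=6$ explains the constant $2/7$. Since the paper already relies on computer verification (Theorem~\ref{thm:all-n}), a brief computation of $\Phi(m)$ for $m<M_0$ would close the argument.
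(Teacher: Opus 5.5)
Your proposal is correct and follows essentially the same route as the paper. Both arguments use the exact formula $\Phi(m)=\frac12\sum_{d\le m}\mu(d)\lfloor m/d\rfloor(\lfloor m/d\rfloor+1)$, the deviation bound $|\lfloor x\rfloor(\lfloor x\rfloor+1)-x^2|\le x$, and the same tail and harmonic-sum estimates to reach $\Phi(m)\ge 3m^2/\pi^2-m-\frac m2\log m$, followed by a finite computer check below an explicit threshold (the paper takes $m\ge360$ via $\pi<22/7$ and $\log m\le m/60$, and your inequality already holds from about $m\approx220$ on). The differences are cosmetic: you obtain the M\"obius formula by inverting the pair-counting identity $\sum_{d\le m}\Phi(\lfloor m/d\rfloor)=m(m+1)/2$ rather than by expanding $\varphi(r)=r\sum_{d\mid r}\mu(d)/d$, and you add the correct observation that $m=6$ gives equality, which shows $2/7$ is sharp.
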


\begin{proof}
Writing $\mu$ for the M\"obius function, M\"obius inversion gives
\[
 \varphi(r)=r\sum_{d\mid r}\frac{\mu(d)}d.
\]
Hence
\[
\begin{aligned}
 \Phi(m)
 &=\sum_{r\le m}\sum_{d\mid r}\mu(d)\frac rd
 =\sum_{d\le m}\mu(d)
   \sum_{k\le \lfloor\frac md\rfloor}k \\
 &=\frac12\sum_{d\le m}\mu(d)
 \left\lfloor\frac md\right\rfloor
 \left(\left\lfloor\frac md\right\rfloor+1\right).
\end{aligned}
\]
Since $x^2-x\leq \lfloor x\rfloor(\lfloor x\rfloor+1)\leq x^2+x$ holds
for every $x\geq1$, the estimates
\[
 \sum_{d\ge1}\frac{\mu(d)}{d^2}=\frac6{\pi^2},\qquad
 \sum_{d>m}\frac1{d^2}\le\frac1m,\qquad
 \sum_{d\le m}\frac1d\le\log m+1,
\]
give
\begin{align*}
 \Phi(m)&=\frac12\sum_{d\le m}\mu(d)
 \left\lfloor\frac md\right\rfloor
 \left(\left\lfloor\frac md\right\rfloor+1\right)\\
 &\geq \frac12\sum_{d\le m}\mu(d)
 \left(\frac md\right)^2-\frac12\sum_{d\le m}|\mu(d)|
 \frac md\\
 &\geq 3m^2/\pi^2-(m/2)\log m-m.
\end{align*}
For $m\ge360$, the inequalities $\pi<22/7$ and $\log m\le m/60$ give
$\Phi(m)-2m(m+1)/7\ge983m^2/101640-9m/7>0$.
For $m<360$, the result follows from the exact integer computation described
in Section~\ref{sec:5}.
This proves \eqref{eq:totient}.
\end{proof}

The following auxiliary function $G$ is also used in~\cite[Section~3]{Cipollini}:
\[
 G(x):=\sum_{1\le r<x}\frac{\varphi(r)}r\left(1-\frac rx\right),
 \qquad G(0):=0.
\]

\begin{lemma}\label{lem:G}
The function $G$ has the following properties.
\begin{enumerate}
\item[(i)] $G(x+1)-G(x)\ge5/18$ for every $x\ge1$.

\item[(ii)] $G(x)\ge5x/18$ for every $x\ge3$, and $G(x)\ge x/4$ for
every $x\ge2$.

\item[(iii)] The function $G$ is increasing, and for every $0\le x<y$,
\[
 \frac5{18}(y-x-1)\le G(y)-G(x)\le y-x.
\]
\end{enumerate}
\end{lemma}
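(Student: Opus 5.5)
The plan is to treat $G$ as a piecewise-smooth function of a real variable and read everything off its derivative. Write
\[
 G(x)=\sum_{1\le r<x}\frac{\varphi(r)}r-\frac1x\sum_{1\le r<x}\varphi(r).
\]
When $x$ passes an integer $r$, the new summand $\frac{\varphi(r)}r(1-\frac rx)$ vanishes at $x=r$, so $G$ is continuous on $[0,\infty)$. On each open interval $(n-1,n)$ with $n\ge1$ an integer, $G$ is differentiable with
\[
 G'(t)=\frac{\Phi(n-1)}{t^2},
\]
where $\Phi(0)=0$. Hence, for $0\le x<y$,
\[
 G(y)-G(x)=\int_x^y\frac{\Phi(\lceil t\rceil-1)}{t^2}\,dt .
\]
This immediately gives that $G$ is increasing.

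For the upper bound in (iii), I bound the integrand by $1$. Put $m=\lceil t\rceil-1<t$. If $m=0$ there is nothing to do. Otherwise $\Phi(m)\le m(m+1)/2\le t\cdot 2t/2=t^2$, using $m+1\le t+1\le 2t$ for $t\ge1$.

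For (i), first take integer endpoints. The integral gives
\[
 G(n+1)-G(n)=\Phi(n)\Bigl(\frac1n-\frac1{n+1}\Bigr)=\frac{\Phi(n)}{n(n+1)},
\]
and this is $\ge\frac27>\frac5{18}$ by Lemma~\ref{lem:totient}. For general $x\in[n-1,n]$, the interval $[x,x+1]$ meets two pieces, so
\[
 G(x+1)-G(x)=\Phi(n-1)\Bigl(\frac1x-\frac1n\Bigr)+\Phi(n)\Bigl(\frac1n-\frac1{x+1}\Bigr).
\]
Inserting Lemma~\ref{lem:totient} and writing $s=n-x\in[0,1]$, this is at least
\[
 \frac27\Bigl(\frac{(n-1)s}{n-s}+\frac{(n+1)(1-s)}{n+1-s}\Bigr).
\]
The bracket tends to $1$ uniformly in $s$ as $n$ grows, so it exceeds $\frac{35}{36}$ once $n$ is moderately large. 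This is the main obstacle: for $n=2$ (i.e.\ $x\in[1,2]$) the crude bound fails, since near $s=1/2$ it gives about $0.267<5/18$. So for the first few $n$ I instead use the exact values $\Phi(1)=1$, $\Phi(2)=2$, $\Phi(3)=4,\dots$ and minimize the explicit one-variable rational function of $x$ on $[n-1,n]$ by hand. For example, at $n=2$, $x=3/2$ the increment is $1/6+2/10\approx0.37$. I take the exact-value route for small $n$ and the totient lemma for the rest, with the crossover chosen where the bracket bound is checked to exceed $35/36$.

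For (ii), I verify directly from the explicit formula that $G(x)\ge x/4$ on $[2,3]$ and $G(x)\ge 5x/18$ on $[3,4]$. Both are finite checks of simple rational functions: for instance $G(x)=1-1/x+\frac12(1-2/x)$ on $(2,3]$. Then induction with (i) propagates each bound: if $G(x)\ge 5x/18$, then $G(x+1)\ge 5x/18+5/18$, and the bound $x/4$ propagates since $5/18\ge1/4$. For the lower bound in (iii), let $k=\lfloor y-x\rfloor\ge y-x-1$. When $x\ge1$, monotonicity and $k$ applications of (i) give
\[
 G(y)\ge G(x+k)\ge G(x)+\tfrac5{18}k\ge G(x)+\tfrac5{18}(y-x-1).
\]
When $x<1$, $G(x)=0$, and the claim follows either trivially (if $y\le x+1$) or from (ii) together with (i) applied from the point $1$.
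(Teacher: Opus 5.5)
Your proof follows the paper's: the same derivative $G'(t)=\Phi(m)/t^2$ on $(m,m+1)$, the same two-piece split in (i), the same bound $\Phi(m)\le m(m+1)/2\le t^2$ for the upper half of (iii), and the same telescoping of (i) for the lower half when $x\ge1$. Your bracket in (i) simplifies to $1-\frac{s(1-s)}{x(x+1)}$, which is exactly the paper's expression. It is $\ge 47/48>35/36$ for $x\ge3$, but on $[2,3]$ it dips to about $0.9706<35/36$. So exact values are needed on both $[1,2]$ and $[2,3]$, as the paper does.

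For (ii) you take a slightly different route. You check $[2,3]$ and $[3,4]$ directly, where the checks reduce to $(x-2)(x-4)\le0$ and $(x-3)(5x-24)\le0$, and then propagate with (i). The paper instead uses $G(2)=1/2$, $G(3)=5/6$, integer increments $\ge2/7$, and concavity of $G$ on each $[m,m+1]$. Both work.

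There is one gap: the lower bound in (iii) for $0\le x<1$. Your justification, ``(ii) together with (i) applied from the point $1$'', does not cover $1<y<2$ with $y>x+1$. There, (ii) does not yet apply, since it starts at $y=2$. Applying (i) from $1$ gives only $G(y)\ge\frac5{18}\lfloor y-1\rfloor=0$, because (i) needs a base point $\ge1$. Even the derivative bound $G'(t)=1/t^2\ge1/4$ on $(1,2)$ is too weak, since $1/4<5/18$.

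The fix is the explicit value $G(y)=1-1/y=(y-1)/y\ge(y-1)/2\ge\frac5{18}(y-x-1)$ on this range, which is what the paper does. For $y\ge2$ you should also say which half of (ii) you use: $G(y)\ge y/4\ge\frac5{18}(y-1)$ on $[2,3)$, valid since $y\le10$, and $G(y)\ge\frac{5y}{18}$ for $y\ge3$. With this case added, your proof is complete.
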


\begin{proof}
For every $x\ge1$,
\[
 G(x)=\sum_{r=1}^{\lfloor x\rfloor}\frac{\varphi(r)}r
      -\frac{\sum_{r=1}^{\lfloor x\rfloor}\varphi(r)}x
     =\sum_{r=1}^{\lfloor x\rfloor}\frac{\varphi(r)}r
      -\frac{\Phi(\lfloor x\rfloor)}x.
\]
It follows that, for every positive integer $m$,
\begin{equation}\label{eq:G-identities}
 G(m+1)-G(m)=\frac{\Phi(m)}{m(m+1)},\qquad
 G'(x)=\frac{\Phi(m)}{x^2}\quad(m<x<m+1).
\end{equation}

We first prove (i).  If $1\le x\le2$, then
\[
 G(x+1)-G(x)=\frac1x+\frac12-\frac2{x+1}\ge\frac13>\frac5{18}.
\]
If $2\le x\le3$, then
\begin{align*}
 G(x+1)-G(x)
 &=\frac2x+\frac23-\frac4{x+1}\ge4\sqrt2-\frac{16}{3}>\frac5{18}.
\end{align*}
For $x\ge3$, integrating the derivative in \eqref{eq:G-identities} on
$[x,\lceil x\rceil]$ and $[\lceil x\rceil,x+1]$ gives
\begin{align*}
 G(x+1)-G(x)
 &=\Phi(\lfloor x\rfloor)
   \left(\frac1x-\frac1{\lceil x\rceil}\right)
  +\Phi(\lceil x\rceil)
   \left(\frac1{\lceil x\rceil}-\frac1{x+1}\right)\\
 &\ge\frac27\left(
  1-\frac{(x-\lfloor x\rfloor)(\lceil x\rceil-x)}{x(x+1)}
  \right)\ge\frac{47}{168}>\frac5{18}.
\end{align*}
Here the first inequality follows from Lemma~\ref{lem:totient}; the second uses
$(x-\lfloor x\rfloor)(\lceil x\rceil-x)\le1/4$ and $x(x+1)\ge12$.

For (ii), Lemma~\ref{lem:totient} and \eqref{eq:G-identities} give
$G(m+1)-G(m)\ge2/7$ for every positive integer $m$.  Since $G(3)=5/6$
and $2/7>5/18$, we have $G(m)\ge5m/18$ at every integer $m\ge3$.
Moreover, $G''(x)=-2\Phi(m)/x^3\le0$ on $(m,m+1)$.  Thus
$G(x)-5x/18$ is concave on $[m,m+1]$ and nonnegative at both endpoints.
Starting instead from $G(2)=1/2$ and using $2/7>1/4$, we have
$G(x)\ge x/4$ for $x\ge2$.  This proves~(ii).

For (iii), the derivative in~\eqref{eq:G-identities} is nonnegative and at most $1$, because
$\Phi(m)\le m(m+1)/2\le x^2$ whenever $x\in(m,m+1)$. Hence $G$ is increasing and
$G(y)-G(x)\le y-x$ whenever $0\le x<y$.
It remains to prove the lower bound in (iii).  If $y-x\le1$, it follows from
the monotonicity of $G$.  Suppose that $y-x>1$. If $x\ge1$, let
$k=\lfloor y-x\rfloor$. Item (i) and monotonicity of $G$ give
\[
 G(y)-G(x)\ge G(x+k)-G(x)\ge\frac5{18}k
 \ge\frac5{18}(y-x-1).
\]
When $0\le x<1$, the definition gives $G(x)=0$.  In this case,
if $1<y<2$, then
\[
 G(y)=\frac{y-1}{y}\ge\frac5{18}(y-1)
      \ge\frac5{18}(y-x-1).
\]
If $2\le y<3$, then item (ii) gives
\[
 G(y)\ge\frac y4\ge\frac5{18}(y-1)
      \ge\frac5{18}(y-x-1).
\]
If $y\ge3$, then
\[
 G(y)\ge\frac5{18}y\ge\frac5{18}(y-x-1).
\]
This completes the proof of Lemma~\ref{lem:G}.
\end{proof}

For $t>0$ and real numbers $X<Y$, define
\begin{equation}\label{eq:H-def}
 \Hfun_t(X,Y):=
 \sum_{h\in\mathbb Z\setminus\{0\}}
 \frac{\varphi(|h|)}{|h|}
 \operatorname{length}\{0<u\le t:Xu<h<Yu\}.
\end{equation}
The sum is finite, since the set in \eqref{eq:H-def} is empty whenever
$|h|\ge t\max\{|X|,|Y|\}$.

\begin{lemma}\label{lem:H-formula}
For $t>0$ and real numbers $X<Y$,
\begin{equation}\label{eq:H-formula}
 \Hfun_t(X,Y)=
 \begin{cases}
  t\bigl(G(tY)-G(tX)\bigr),&0\le X<Y,\\[1mm]
  t\bigl(G(-tX)-G(-tY)\bigr),&X<Y\le0,\\[1mm]
  t\bigl(G(-tX)+G(tY)\bigr),&X<0<Y.
 \end{cases}
\end{equation}
\end{lemma}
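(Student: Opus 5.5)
The plan is to reduce everything to a single one-sided identity and then obtain the three cases by symmetry and additivity. The basic claim is that for every $Z\ge0$,
\[
 \Hfun_t(0,Z)=\sum_{h\ge1}\frac{\varphi(h)}h\operatorname{length}\{0<u\le t:\ h<Zu\}=t\,G(tZ).
\]
For each $h\ge1$ the set $\{0<u\le t: u>h/Z\}$ is the interval $(h/Z,t]$. Its length is $t-h/Z=t(1-h/(tZ))$ when $h<tZ$, and it is empty otherwise. The sum therefore equals
\[
t\sum_{1\le h<tZ}\frac{\varphi(h)}h\Bigl(1-\frac h{tZ}\Bigr)=tG(tZ),
\]
which is the definition of $G$. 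For $Z=0$ both sides vanish, consistent with $G(0)=0$ and with $G(x)=0$ for $0\le x<1$.

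\emph{Case $0\le X<Y$.} Only $h\ge1$ contribute, since $h<0<Xu$ is impossible. For fixed $h$ and $u>0$, the condition $Xu<h<Yu$ is the difference of the nested conditions $h<Yu$ and $h<Xu$. Indeed $h<Xu$ implies $h<Yu$, because $Xu<Yu$. Hence
\[
\{u: Xu<h<Yu\}=\{u:h<Yu\}\setminus\{u:h<Xu\},
\]
and the lengths subtract. Summing over $h$ gives $\Hfun_t(X,Y)=\Hfun_t(0,Y)-\Hfun_t(0,X)=t(G(tY)-G(tX))$. Strictly speaking, $\Hfun_t(0,X)$ is only defined for $X>0$, so I will use the sum formula directly; it still gives $0$ when $X=0$.

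\emph{Case $X<Y\le0$.} Substituting $h\mapsto-h$ and using $\varphi(|h|)/|h|$ even, the condition $Xu<h<Yu$ becomes $(-Y)u<h'<(-X)u$. This gives $\Hfun_t(X,Y)=\Hfun_t(-Y,-X)$ with $0\le -Y<-X$, and the first case yields $t(G(-tX)-G(-tY))$.

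\emph{Case $X<0<Y$.} Split the sum into $h>0$ and $h<0$. For $h>0$ the condition $Xu<h$ holds automatically, so this part equals $tG(tY)$ by the basic identity. For $h<0$ the condition $h<Yu$ is automatic, and after the reflection this part equals $tG(-tX)$. Adding the two parts gives the third line.

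I expect no real obstacle. The only care needed is in the boundary conventions: the strict inequalities, the fact that the sum defining $G$ is over $r<x$ (which matches the strict condition $h<tZ$ for nonempty intervals), and the degenerate endpoints $X=0$ or $Y=0$, which are handled by $G(0)=0$.
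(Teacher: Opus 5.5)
Your proposal is correct and follows essentially the same route as the paper. Both arguments rest on the one-sided identity $\sum_{h\ge1}\frac{\varphi(h)}{h}\operatorname{length}\{0<u\le t:h<Zu\}=tG(tZ)$, valid also for $Z=0$; the case $0\le X<Y$ then follows by subtracting the nested sets, and your reflection $h\mapsto-h$ and sign-splitting supply exactly the details the paper leaves as ``follow similarly'' for the cases $X<Y\le0$ and $X<0<Y$.
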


\begin{proof}
Suppose first that $0\le X<Y$.  For every $Z>0$ and every positive
integer $h$,
\[
 \operatorname{length}\{0<u\le t:h<Zu\}
 =
 \begin{cases}
  t-h/Z,&0<h<tZ,\\
 0,&h\ge tZ.
 \end{cases}
\]
Hence
\[
 \sum_{h=1}^{\infty}\frac{\varphi(h)}h
 \operatorname{length}\{0<u\le t:h<Zu\}
 =\sum_{1\le h<tZ}\frac{\varphi(h)}h
 \left(t-\frac hZ\right)
 =tG(tZ).
\]
This identity also holds for $Z=0$, since both sides are zero. We thus have
\begin{align*}
 \Hfun_t(X,Y)
 &=\sum_{h=1}^{\infty}\frac{\varphi(h)}h
 \operatorname{length}\{0<u\le t:h<Yu\}\\
 &\quad-\sum_{h=1}^{\infty}\frac{\varphi(h)}h
 \operatorname{length}\{0<u\le t:h<Xu\}\\
 &=tG(tY)-tG(tX).
\end{align*}
The cases $X<Y\le0$ and $X<0<Y$ follow similarly.
\end{proof}

We finish the section with two classical results.

\begin{thm}[Dirichlet's approximation theorem]\label{thm:dirichlet}
For every $\alpha\in[0,1]$ and every positive integer $m$, there is a
fraction $p/q\in\F_m$ such that $|q\alpha-p|\le1/m$.
\end{thm}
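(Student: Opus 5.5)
We would use the classical pigeonhole argument applied to the $m+1$ numbers $\{k\alpha\}$ for $k=0,1,\dots,m$, where $\{x\}=x-\lfloor x\rfloor$ denotes the fractional part. The approach is to find two of these multiples whose difference is an integer multiple of $\alpha$ lying within $1/m$ of an integer, then reduce the resulting fraction to lowest terms and confirm that it lies in $\F_m$.

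\textbf{Pigeonhole step.} Divide $[0,1]$ into the $m$ intervals $[j/m,(j+1)/m]$ for $j=0,\dots,m-1$. Since we have $m+1$ points $\{k\alpha\}\in[0,1)$, two of them, say for indices $0\le k_1<k_2\le m$, fall into the same interval. Hence
\[
 \bigl|\{k_2\alpha\}-\{k_1\alpha\}\bigr|\le \frac1m.
\]
Set $q_0:=k_2-k_1\in\{1,\dots,m\}$ and $p_0:=\lfloor k_2\alpha\rfloor-\lfloor k_1\alpha\rfloor$. Then $|q_0\alpha-p_0|\le1/m$.

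\textbf{Range of $p_0$.} We need $0\le p_0\le q_0$. Because $\alpha\in[0,1]$, we have $0\le q_0\alpha\le q_0$. If $p_0$ were negative, then $|q_0\alpha-p_0|\ge1>1/m$ whenever $m\ge2$. Similarly, if $p_0>q_0$, then $|q_0\alpha-p_0|\ge1$. The case $m=1$ is trivial: the fraction $0/1$ or $1/1$ works, since $|\alpha-p|\le1$ for $p\in\{0,1\}$. For $m\ge2$ we therefore get $0\le p_0\le q_0$. The boundary case $\alpha=1$ is fine, since then $\{k\alpha\}=0$ for all $k$ and one can simply take $1/1$.

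\textbf{Reduction to lowest terms.} Let $d=\gcd(p_0,q_0)$, and write $p=p_0/d$ and $q=q_0/d$. Then $p/q$ is a reduced fraction in $[0,1]$ with $q\le m$, so $p/q\in\F_m$. Moreover,
\[
 |q\alpha-p|=\frac{|q_0\alpha-p_0|}{d}\le\frac1m.
\]
If $p_0=0$, then $d=q_0$ and the fraction is $0/1$, which is still valid.

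We do not expect any real obstacle here. The only care needed is in the endpoint bookkeeping: using closed intervals so that the pigeonhole step gives $\le$ rather than $<$, and checking that $p/q$ lands in $[0,1]$.
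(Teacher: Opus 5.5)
Your pigeonhole argument is correct and complete. The paper gives no proof of this statement and simply quotes it as a classical result; what you wrote is its standard proof. One small simplification: $0\le p_0\le q_0$ holds automatically, because $\lfloor (k+1)\alpha\rfloor-\lfloor k\alpha\rfloor\in\{0,1\}$ for $\alpha\in[0,1]$, so the separate treatment of $m=1$ and of $\alpha=1$ is not needed.
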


\begin{thm}[Dress~\cite{Dress}]\label{thm:dress}
For every positive integer $n$ and every $0\le\alpha\le1$,
\[
 \Phi(n)\left(\alpha-\frac1n\right)
 \le \left|\F_n\cap(0,\alpha]\right|
 \le \Phi(n)\left(\alpha+\frac1n\right).
\]
\end{thm}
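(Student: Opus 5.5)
Since this is a classical result due to Dress and is only quoted here, I would follow the natural structure of the problem rather than any analytic machinery. Put $N(\alpha):=|\F_n\cap(0,\alpha]|$ and $D(\alpha):=N(\alpha)-\alpha\Phi(n)$.

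\textbf{Reduction to Farey points.} The claim is $|D(\alpha)|\le\Phi(n)/n$. The function $D$ is right-continuous and piecewise linear with slope $-\Phi(n)$, and it jumps by $+1$ at each element of $\F_n\cap(0,1]$. Its supremum is therefore attained at some $a/b\in\F_n$. Its infimum is a left limit at such a point, or the value $D(1)=0$. If $a/b$ has index $i$, so $N(a/b)=i$, it suffices to prove
\[
 i\le\Phi(n)\Bigl(\frac ab+\frac1n\Bigr),\qquad i-1\ge\Phi(n)\Bigl(\frac ab-\frac1n\Bigr).
\]
The symmetry $x\mapsto1-x$ of $\F_n$ exchanges these two inequalities: it sends $a/b$ to $(b-a)/b$ and $i$ to $\Phi(n)-i+1$. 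So only the upper bound needs proof.

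\textbf{Counting by denominators.} For fixed $a/b$, write $i=\sum_{q\le n}c_q$ with $c_q:=|\{1\le p\le qa/b:\gcd(p,q)=1\}|$. Möbius inversion gives
\[
 c_q=\sum_{d\mid q}\mu(d)\left\lfloor\frac{qa}{bd}\right\rfloor .
\]
Comparing with $\frac ab\varphi(q)=\sum_{d\mid q}\mu(d)\frac{qa}{bd}$, the discrepancy becomes
\[
 D(a/b)=-\sum_{d\le n}\mu(d)\sum_{k\le n/d}\Bigl\{\frac{ka}{b}\Bigr\}.
\]
Estimating this directly loses a logarithm, so I would not do that. Instead I would use the unimodular structure. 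Let $a'/b'$ be the left Farey neighbour of $a/b$, so $ab'-a'b=1$. Every coprime pair $(p,q)$ then has a unique representation $(p,q)=x(a,b)-y(a',b')$ with $x,y\in\mathbb Z$ and $\gcd(x,y)=1$. The condition $p/q\le a/b$ becomes $y\ge0$, and $q\le n$ becomes $xb-yb'\le n$. This turns $i$ into a count of primitive lattice points in a triangle that shares one vertex and one edge with the triangle counted by $\Phi(n)$.

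\textbf{Main step.} The ratio $i/\Phi(n)$ should then be compared with the area ratio, which is close to $a/b$. The error should be controlled by the primitive points on the boundary edge $y=0$ and the strip $xb-yb'\in(n-b,n]$. The total contribution of these is at most about $\Phi(n)/n$, since a strip of width comparable to one denominator carries a proportion of order $1/n$ of all fractions.

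I expect this last step to be the main obstacle. Getting the exact constant $1/n$, rather than $O(\log n/n)$, is the real content of Dress's theorem. My first attempt would be induction on $n$: passing from $n-1$ to $n$ adds the $\varphi(n)$ fractions $p/n$. I would try to show that the quantity $\Phi(n)(a/b+1/n)-N(a/b)$, minimised over $a/b$, is nondecreasing in $n$. This would use $\Phi(n)/n-\Phi(n-1)/(n-1)\ge0$ together with the fact that the new fractions $p/n$ interlace the old Farey points. If that fails, I would fall back on citing Dress~\cite{Dress}, as the paper does.
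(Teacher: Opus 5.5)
The paper gives no proof of this statement. It is quoted from Dress and used as a black box, so your fallback of citing Dress is exactly what the paper does. Read as a self-contained proof, however, the proposal has a genuine gap, and the reduction step is already wrong. Write $\F_n=\{x_0=0<x_1<\dots<x_{\Phi(n)}=1\}$, so that $N(x_i)=i$. The reflection sends $x_i$ to $1-x_i=x_{\Phi(n)-i}$, whose index is $\Phi(n)-i$, not $\Phi(n)-i+1$. With the correct index, the lower bound $i-1\ge\Phi(n)(x_i-1/n)$ is equivalent to $N(y)+1\le\Phi(n)(y+1/n)$ at $y=1-x_i$. That is the upper bound for $|\F_n\cap[0,y]|$ (counting $0$), which is strictly stronger than the upper bound you reduce to. The lost unit is exactly where the theorem is sharp. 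The lower bound at $x_1=1/n$ is an equality, $0\ge0$, while the upper bound at the reflected point $(n-1)/n$ reads $\Phi(n)-1\le\Phi(n)$. So proving only the stated upper bound would leave the sharp half of the theorem unproved. (There is also a harmless sign slip: with $(p,q)=x(a,b)-y(a',b')$ and $ab'-a'b=1$ one gets $pb-qa=y$, so $p/q\le a/b$ corresponds to $y\le0$.)

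The main step is then only a heuristic. Bounding the edge $y=0$ and the strip $xb-yb'\in(n-b,n]$ by ``about $\Phi(n)/n$'' gives at best $O(\Phi(n)/n)$. Counting primitive points in the triangle by M\"obius inversion also reintroduces the logarithmic loss you wanted to avoid. The entire content of Dress's theorem is the exact constant.

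The fallback induction does not work as sketched either:
\begin{itemize}
\item The ingredient you name is false. Since $\Phi(29)=270$ and $\Phi(30)=278$, we have $\Phi(30)/30<\Phi(29)/29$, so $\Phi(n)/n$ is not monotone.
\item The quantity $E_n(\alpha):=\Phi(n)(\alpha+1/n)-N_n(\alpha)$ is not pointwise nondecreasing in $n$. For example, $E_{n-1}((n-1)/n)=1+\Phi(n-1)/(n(n-1))$, while $E_n((n-1)/n)=1$.
\item Since $E_n((n-1)/n)=1$ for every $n\ge2$ (and $E_1(1)=1$), the minimum you want to show is nondecreasing is at most $1$ for all $n$. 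By the correct, strengthened form of the theorem it is identically $1$. Any inductive step therefore has zero slack and must be exact, not approximate.
\end{itemize}
Nothing in the proposal supplies such an exact argument, so it does not replace the citation to Dress.
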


\section{Farey subsequences not containing $1/2$}\label{sec:3}
This section focuses on the analysis of the interval
$(a/b,(a+1)/(b-1))$ when its left endpoint $a/b$ satisfies
$b-2a\notin\{1,2\}$. The following theorem is the formal statement in this
case.

\begin{thm}\label{thm:general}
Let $n\ge4$, and let $a/b\in\F_n$ satisfy $1\le a\le b-2$.
If $b-2a\notin\{1,2\}$, then
\[
 \left|\F_n\cap
 \left(\frac ab,\frac{a+1}{b-1}\right)\right|
 \ge\frac{5n}{18}-O(\sqrt n\log n).
\]
\end{thm}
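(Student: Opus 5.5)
The plan is to follow the lattice-counting framework of \cite{Cipollini}: reduce the count to a weighted sum of the form $\Hfun_t(X,Y)$, and then bound it below using Lemma~\ref{lem:H-formula} and Lemma~\ref{lem:G}.

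\textbf{Step 1: lattice parametrization.} Let $x_0/y_0$ be the Farey neighbour of $a/b$ with $x_0b-y_0a=1$ and $0<y_0\le b$. Every lattice point $(p,q)$ can be written uniquely as $(p,q)=h(x_0,y_0)+u(a,b)$ with $h,u\in\mathbb Z$, where $h=pb-qa$. Moreover $\gcd(p,q)=1$ if and only if $\gcd(h,u)=1$. We have
\[
 p/q>a/b \iff h>0, \qquad p(b-1)-q(a+1)=h-(p+q).
\]
So $p/q<(a+1)/(b-1)$ is equivalent to $p+q>h$. The quantity to bound is therefore the number of coprime pairs $(h,u)$ with $h\ge1$, $q=hy_0+ub\le n$, $p,q>0$, and $h(x_0+y_0-1)+u(a+b)>0$.

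\textbf{Step 2: reduction to $\Hfun_t$.} For fixed $h$ these conditions confine $u$ to a real interval $I_h$. Its endpoints are linear in $h$, with slopes determined by $x_0/y_0$, $a/b$ and $(a+b)$. The number of integers in $I_h$ coprime to $h$ is
\[
 \frac{\varphi(h)}h|I_h|+O(2^{\omega(h)}).
\]
After rescaling $u$ (and taking $t$ of order $n/b$), the main term $\sum_h\frac{\varphi(h)}h|I_h|$ becomes $\Hfun_t(X,Y)$ for suitable slopes $X<Y$. Lemma~\ref{lem:H-formula} then expresses it as $t$ times a difference or sum of values of $G$. Lemma~\ref{lem:G}(ii),(iii) gives the lower bound $t\cdot\frac5{18}(t(Y-X)-1)$, or $\frac5{18}$ times the relevant arguments in the case $X<0<Y$. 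I expect that, after substituting the explicit slopes, this is $\ge\frac{5n}{18}-O(n/b)$.

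The hypothesis $b-2a\notin\{1,2\}$ enters exactly here. When $b-2a\in\{1,2\}$, the neighbour $x_0/y_0$ and the endpoint $(a+1)/(b-1)$ straddle $1/2$. In that case the $h=1$ or small-$h$ lines contribute degenerately, and the slopes make $t(Y-X)$ too small. Outside that case, $y_0$ and $x_0+y_0-1$ are comparable to $b$ and $a+b$ in the right way, so the region has area giving $5n/18$.

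\textbf{Step 3: small $b$.} When $b\le\varepsilon n$ (or even $b\le n/4$), I will not use the lattice argument. Theorem~\ref{thm:dress} applied to $(0,(a+1)/(b-1)]$ and $(0,a/b]$ gives a count of at least
\[
 \Phi(n)\Bigl(\frac{a+b}{b(b-1)}-\frac2n\Bigr)-1.
\]
By Lemma~\ref{lem:totient}, this is already $\ge n$ once $b\le n/4$ or so. So the lattice argument is only needed for $b\asymp n$, where $t\asymp1$ and the $O(n/b)$ losses are $O(1)$.

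\textbf{Main obstacle: the error term.} Summing $O(2^{\omega(h)})$ over all $h$ up to the maximal $h$ in the region (of size $\asymp n$) costs $O(n\log n)$, which is too much. My plan is to split at $h\le H:=\sqrt n$.
\begin{itemize}
\item For $h\le H$, use the coprimality count above, with total error $\sum_{h\le H}2^{\omega(h)}=O(\sqrt n\log n)$.
\item For $h>H$, the lines $I_h$ are short, of length $O(n/h)$ relative to spacing. Here I will drop the lower-bound contribution entirely, or count it via a sum over $u$ instead (with $u$ small, $|u|\lesssim n/H=\sqrt n$), again with error $O(\sqrt n\log n)$. Swapping the roles of $h$ and $u$ is the standard hyperbola-type trick.
\end{itemize}
I must then check that the truncated main term differs from the full $\Hfun_t(X,Y)$ by only $O(\sqrt n\log n)$. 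This uses $\sum_{h\le x}\varphi(h)/h=\frac6{\pi^2}x+O(\log x)$ symmetrically on the $u$ side. Verifying that the truncation and the crude treatment near the boundary of the region do not eat into the constant $5/18$ is where I expect the real work to lie.
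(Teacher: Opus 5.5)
\textbf{There is a genuine gap, and it lies in the choice of coordinates rather than in the error bookkeeping.} Your Step~1 identities are correct. In the coordinates $h=pb-qa$, $u=x_0q-y_0p$, however, the region is the triangle with vertices $(0,0)$, $(0,n/b)$ and $(h_Q,u_Q)$, where $h_Q=n(a+b)/(b-1)\in[n,2n]$ and $u_Q=-n(x_0+y_0-1)/(b-1)$. Each line $h=\mathrm{const}$ meets this triangle in a $u$-interval of length $\frac nb\bigl(1-\frac h{h_Q}\bigr)<4$, and there are about $h_Q\asymp n$ such lines. So the M\"obius errors add up to order $n$.

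Your hyperbola split does not rescue this, because nothing forces $|hu|\lesssim n$. Along the lower edge, $u=-h(x_0+y_0-1)/(a+b)$, which has size $hy_0/b$. So for $h>\sqrt n$ the values of $u$ are not small. When $y_0\asymp b$, the lines $u=\mathrm{const}$ are also $\asymp n$ in number, with $O(1)$ points each. Dropping $h>\sqrt n$ instead would discard almost the entire count.

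More decisively, no error analysis in these coordinates can work. Your main term is exactly $\sum_{1\le h<h_Q}\frac{\varphi(h)}h\cdot\frac nb\bigl(1-\frac h{h_Q}\bigr)=\frac nbG(h_Q)$. By Lemma~\ref{lem:G}(ii) this is at least $\frac5{18}\cdot\frac{n^2(a+b)}{b(b-1)}>\frac{5n}{18}$ \emph{unconditionally}, so the hypothesis $b-2a\notin\{1,2\}$ cannot enter it, contrary to your Step~2. Now take $n=4m$ and $a/b=(2m-1)/(4m)$. The true count is $m+1$: only the fractions $s/(2s+1)$ with $m\le s\le2m-1$, together with $1/2$. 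So your error term must be at least $n/36-1$. For these thin triangles, the area-times-density main term is simply wrong.

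The missing idea is to slice along a \emph{short} lattice vector that is nearly parallel to $(a,b)$. The paper takes a Dirichlet approximation $p/q\in\F_{\lfloor\sqrt n\rfloor}$ with $|qa-pb|\le b/\lfloor\sqrt n\rfloor$ and parametrizes by $h=qx-py$. Then only $|h|=O(\sqrt n)$ lines meet the region, each of length up to $n/q$. The M\"obius error is therefore $O(\sqrt n\log n)$ directly, with no splitting.

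The main term becomes $\frac bq\Hfun_{n/b}(A,A+p+q)$ with $A=aq-bp$, and this retains the arithmetic of $a/b$. This is where the hypothesis enters. The bound $\Hfun_{n/b}(A,A+p+q)\ge\frac{5qn}{18b}$ follows from a short case analysis with Lemma~\ref{lem:G}, and the hypothesis is needed only to exclude $q=2$, $p=1$, $A=2a-b\in\{-1,-2\}$, i.e.\ exactly $b-2a\in\{1,2\}$.

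Your Step~3 matches the paper's treatment of $b\le n/4$. The only correction is that Dress gives about $4n/7$ there, not $n$, which still suffices.
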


The following two technical lemmas make the further assumption that $b>n/4$.
Throughout these two lemmas, we assume
\begin{equation}\label{eq:conditions}
\begin{gathered}
 n\ge4,\quad a/b\in\F_n,\quad 1\le a\le b-2,\quad
 b>n/4,\quad b-2a\notin\{1,2\},\\
 p/q\in\F_{\lfloor\sqrt n\rfloor},\quad
 |qa/b-p|\le1/\lfloor\sqrt n\rfloor,\quad A:=aq-bp.
\end{gathered}
\end{equation}
The existence of the required fraction $p/q$ follows from
Theorem~\ref{thm:dirichlet}.

\begin{lemma}\label{lem:determinant-count}
Under the conditions~\eqref{eq:conditions},
\[
 \left|\F_n\cap
 \left(\frac ab,\frac{a+1}{b-1}\right)\right|
 =
 \frac bq\Hfun_{n/b}(A,A+p+q)
 +O(\sqrt n\log n).
\]
\end{lemma}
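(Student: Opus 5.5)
We parametrize the fractions in the interval by an integer determinant, so that the count becomes a sum over finitely many integers $h$. Let $c/d\in\F_n$ lie in $\bigl(\frac ab,\frac{a+1}{b-1}\bigr)$ and put $k:=bc-ad\ge1$. The right-hand condition $c(b-1)<d(a+1)$ is equivalent to $k<c+d$. Since $c=(ad+k)/b$, this becomes $k(b-1)<d(a+b)$, i.e.\ $k<d(a+b)/(b-1)$. Define the second integer
\[
 h:=qc-pd .
\]
A direct computation gives $bh=qk+dA$. Writing $u:=d/b\in(0,n/b]$, we get $h/u=A+qk/d$. So the constraint $1\le k<d(a+b)/(b-1)$ says, up to a boundary effect at $k=1$, that
\[
 Au<h<\Bigl(A+\tfrac{q(a+b)}{b-1}\Bigr)u .
\]
The upper slope differs from $A+p+q$ by $q\bigl(\frac{a+b}{b-1}-\frac{p+q}{q}\bigr)$. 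Using $|qa/b-p|\le1/\lfloor\sqrt n\rfloor$ and $b>n/4$, this difference is $O(1/\sqrt n)$. The lower constraint is $k\ge1$ rather than $k>0$; this only removes the points with $h$ exactly on the line $h=Au$, which is harmless.

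Next we count, for each fixed $h\ne0$, the admissible reduced pairs $(c,d)$. The solutions of $qc-pd=h$ form a single arithmetic progression $d\equiv d_h\pmod q$, with $c$ determined by $d$. As $d$ runs over it, $u=d/b$ moves in steps of $q/b$. Any common divisor $g$ of $c$ and $d$ divides $h$. By M\"obius inversion over $g\mid h$, with $\gcd(g,q)=1$ handled as usual since $\gcd(p,q)=1$, the number of coprime solutions with $u$ in a set $I_h$ that is an interval equals
\[
 \frac bq\,\frac{\varphi(|h|)}{|h|}\operatorname{length}(I_h)+O(\tau(|h|)).
\]
Here $I_h=\{0<u\le n/b: Au<h<(A+p+q)u\}$, and $\tau$ is the divisor function. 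The bound $d\le n$ corresponds exactly to $u\le n/b$. The case $h=0$ forces $c/d=p/q$, contributing at most one fraction. Summing over $h$ gives the main term $\frac bq\Hfun_{n/b}(A,A+p+q)$, by the definition \eqref{eq:H-def}.

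It remains to control the errors, which is the step I expect to need the most care. The number of relevant $h$ is at most $t\max\{|A|,|A+p+q|\}$ with $t=n/b<4$. Since $|A|=b|qa/b-p|\le b/\lfloor\sqrt n\rfloor$ and $p+q\le2\sqrt n$, we get
\[
 t|A|\le\frac{n}{\lfloor\sqrt n\rfloor}=O(\sqrt n),\qquad t(p+q)=O(\sqrt n),
\]
so only $O(\sqrt n)$ values of $h$ occur, each with $|h|=O(\sqrt n)$. The M\"obius error then totals $\sum_{|h|\ll\sqrt n}\tau(|h|)=O(\sqrt n\log n)$.

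Replacing the true slope by $A+p+q$ changes each $I_h$ by length $O(|h|/(p+q)^2\cdot n^{-1/2})$. This is tiny, and I would check carefully that after multiplying by $b/q$ the accumulated change is $O(\sqrt n\log n)$. If $q$ is small this needs attention; I would first try bounding the change by counting lattice points in a thin sector directly, namely $O(n\cdot n^{-1/2})$ points near a line through the origin. Should that fail, I would use the coarser bound that each $h$ gains or loses $O(1)$ solutions from endpoint effects.
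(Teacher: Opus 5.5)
Up to the last step your argument is the paper's: $h=qc-pd$, the progression in $d$ (the paper writes $d=sh+qj$ with $qr-ps=1$), M\"obius inversion over $g\mid h$, $|h|=O(\sqrt n)$, and a divisor-sum error $O(\sqrt n\log n)$. Two small remarks on this part.
\begin{itemize}
\item Your phrase about $\gcd(g,q)=1$ is unnecessary and slightly misleading. In the coordinates $(h,j)$ one has $\gcd(c,d)=\gcd(h,j)$, so for $g\mid h$ the condition ``$g\mid c$ and $g\mid d$'' is exactly $g\mid j$, which has density $1/g$ regardless of $\gcd(g,q)$.
\item Since $k$ is an integer, $k\ge1$ and $k>0$ are the same condition, so there is no boundary effect at all.
\end{itemize}
The M\"obius count must be done with the true upper slope $Y':=A+q(a+b)/(b-1)=b(A+p+q)/(b-1)$, because that is what the Farey condition gives. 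With $I_h$ defined using $A+p+q$ you are counting a different region, so the lemma rests entirely on the replacement step, which you leave open.

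That step is the gap, and the bound you propose for it is wrong. You only recorded that the slope difference is $O(n^{-1/2})$. In fact it equals exactly $(A+p+q)/(b-1)$, i.e.\ $Y'=Y\,b/(b-1)$ with $Y:=A+p+q$. This is a \emph{relative} perturbation of size $1/(b-1)$. For an affected $h$ the endpoint moves from $h/Y$ to $h/Y'$, a shift of $|h|/(|Y'|(b-1))$, not $O(|h|n^{-1/2}/(p+q)^2)$. The correct denominator is $|Y||Y'|$, and $|A+p+q|$ can be much smaller or much larger than $p+q$. For example, with $q=1$, $p=0$, $A=a\asymp\sqrt n$, your bound summed over $|h|\ll\sqrt n$ and multiplied by $b/q$ is of order $n^{3/2}$. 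This is exactly why small $q$ looked dangerous.

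With the exact form the estimate is uniform in $q$. Only $h$ of the sign of $Y$ with $|h|<t|Y'|$ are affected. Each shift is therefore at most $t/(b-1)$ in $u$, i.e.\ at most $\frac bq\cdot\frac t{b-1}<6$ in $j$. That is what actually justifies your fallback of $O(1)$ solutions per $h$, giving $O(\sqrt n)$ in total.

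The paper does this in one line. By Lemma~\ref{lem:H-formula}, $\mathcal H_t(A,Y)$ and $\mathcal H_t(A,Y')$ fall into the same case, and $G$ is $1$-Lipschitz by Lemma~\ref{lem:G}(iii). Hence $\frac bq|\mathcal H_t(A,Y')-\mathcal H_t(A,Y)|\le\frac bq\,t^2\frac{|A+p+q|}{b-1}\le24|A+p+q|=O(\sqrt n)$.

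Your thin-sector idea can also be made rigorous, but not through area, since lattice points in a thin sector are not controlled by its area. Instead, run the M\"obius count on the interval $(a/b,\,a/b+(p+q)/(bq))$, which is exactly the region with upper slope $Y$. It differs from $(a/b,(a+1)/(b-1))$ by an interval of length $|A+p+q|/(bq(b-1))=O(n^{-3/2})$. That interval contains only $O(\sqrt n)$ reduced fractions with denominator at most $n$, because such fractions are at least $1/n^2$ apart.
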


\begin{proof}
Fix an integer $h$ and let
\[
 N_h:=\left\{(x,y)\in\mathbb Z^2:
  0<x<y\le n,\quad \gcd(x,y)=1, \quad
\frac ab<\frac xy<\frac{a+1}{b-1},
  \quad qx-py=h\right\}.
\]
Then we have
\begin{equation}\label{eq:sum-Nh}
 \left|\F_n\cap
 \left(\frac ab,\frac{a+1}{b-1}\right)\right|
 =\sum_{h\in\mathbb Z}|N_h|.
\end{equation}

Since $\gcd(p,q)=1$, we may choose integers $r$ and $s$ such that
$qr-ps=1$. Thus
\begin{equation*}
 \begin{pmatrix}x\\y\end{pmatrix}
 =\begin{pmatrix}r&p\\s&q\end{pmatrix}
 \begin{pmatrix}h\\j\end{pmatrix},
 \qquad
 \begin{pmatrix}h\\j\end{pmatrix}
 =\begin{pmatrix}q&-p\\-s&r\end{pmatrix}
 \begin{pmatrix}x\\y\end{pmatrix},
\end{equation*}
where the two displayed matrices are inverse integer matrices. Hence this gives
a bijection between the integer pairs $(x,y)$ satisfying $qx-py=h$ and
the integers $j$, and it also implies
\begin{equation}\label{eq:gcd-change}
 \gcd(x,y)=\gcd(h,j).
\end{equation}
For $y>0$, a direct calculation gives
\begin{equation}\label{eq:determinant-condition}
 \frac ab<\frac xy<\frac{a+1}{b-1}
 \quad\Longleftrightarrow\quad
 A\cdot\frac {sh+qj}{b}<h<\frac{b(A+p+q)}{b-1}\cdot\frac {sh+qj}{b}.
\end{equation}
Since $a/b>0$ and $(a+1)/(b-1)\le1$, the inequalities in
\eqref{eq:determinant-condition} also imply $0<x<y$.  Therefore
\eqref{eq:gcd-change} and \eqref{eq:determinant-condition} give
\begin{equation}\label{eq:Nh-coprime-j}
 |N_h|=\left|\{j\in I_h\cap\mathbb Z:\gcd(j,h)=1\}\right|,
\end{equation}
where $I_h$ is the following possibly empty interval:
\[
 I_h:=\left\{j\in\mathbb R:0<sh+qj\le n,\quad
 A\cdot\frac{sh+qj}{b}<h<
 \frac{b(A+p+q)}{b-1}\cdot\frac{sh+qj}{b}\right\}.
\]

For $h\ne0$, the change of variables $u=(sh+qj)/b$ gives
\begin{equation}
 \operatorname{length}(I_h)=\frac bq
 \operatorname{length}\left\{0<u\le\frac nb:
 Au<h<\frac{b(A+p+q)}{b-1}u\right\}\notag.
\end{equation}
Consequently, the definition of $\Hfun_t(X,Y)$ gives
\begin{equation}\label{eq:H-Ih-relation}
 \sum_{h\in\mathbb Z\setminus\{0\}}
 \frac{\varphi(|h|)}{|h|}\operatorname{length}(I_h)=\frac bq\Hfun_{n/b}\left(A,\frac{b(A+p+q)}{b-1}\right).
\end{equation}

When $h=0$, we have $(x,y)=j(p,q)$, and hence $j=1$ whenever
$(x,y)\in N_0$. Thus $|N_0|\le1$. For $h\ne0$ and every positive
divisor $d$ of $|h|$, the number of multiples of $d$ in $I_h$ is
$\operatorname{length}(I_h)/d+O(1)$. Let $\mu$ be the M\"obius
function. Applying M\"obius inversion to \eqref{eq:Nh-coprime-j}, we obtain
\[
 |N_h|
 =\sum_{d\mid |h|}\mu(d)
 \left(\frac{\operatorname{length}(I_h)}d+O(1)\right)
 =
 \frac{\varphi(|h|)}{|h|}\operatorname{length}(I_h)
 +O\left(\sum_{d\mid |h|}1\right)
 \qquad(h\ne0).
\]
If $I_h$ is empty, both $|N_h|$ and the main term in this estimate
vanish. Hence, summing over $h\ne0$ and using \eqref{eq:sum-Nh},
\eqref{eq:H-Ih-relation}, we obtain
\begin{align}\label{eq:fixed-h-count}
 \left|\F_n\cap
 \left(\frac ab,\frac{a+1}{b-1}\right)\right|=\frac bq\Hfun_{n/b}\left(
 A,\frac{b(A+p+q)}{b-1}\right)+O\left(
 \sum_{\substack{h\ne0\\I_h\ne\emptyset}}
 \sum_{d\mid |h|}1\right).
\end{align}

By the assumptions in~\eqref{eq:conditions},
\[
 |A|\le\frac b{\lfloor\sqrt n\rfloor}=O(\sqrt n),
 \qquad p+q\le2\lfloor\sqrt n\rfloor,
 \qquad |A+p+q|\le
 \left|\frac{b(A+p+q)}{b-1}\right|=O(\sqrt n).
\]
Moreover, if $I_h$ is nonempty, then
\[
 |h|\le K:=\frac nb\max\left\{|A|,
 \left|\frac{b(A+p+q)}{b-1}\right|\right\}=O(\sqrt n).
\]
Therefore
\[
 1+\sum_{\substack{h\ne0\\ I_h\ne\emptyset}}
 \sum_{d\mid |h|}1
 \le 1+2\sum_{d\le K}\left\lfloor\frac Kd\right\rfloor
 =O\bigl(1+K\log(2K)\bigr)
 =O\bigl(\sqrt n\log n\bigr).
\]
Substituting this into~\eqref{eq:fixed-h-count} gives
\begin{equation}\label{eq:direct-H-count}
 \left|\F_n\cap
 \left(\frac ab,\frac{a+1}{b-1}\right)\right|
 =\frac bq\Hfun_{n/b}\left(A,\frac{b(A+p+q)}{b-1}\right)
 +O(\sqrt n\log n).
\end{equation}

Finally, Lemmas~\ref{lem:H-formula}
and~\ref{lem:G}(iii) give
\[
 \left|\Hfun_{n/b}\left(A,\frac{b(A+p+q)}{b-1}\right)
 -\Hfun_{n/b}(A,A+p+q)\right|
 \le\left(\frac nb\right)^2\frac{|A+p+q|}{b-1}.
\]
Since $n/b<4$, $b/(b-1)\le3/2$, it follows that
\[
 \frac bq\left|\Hfun_{n/b}\left(A,\frac{b(A+p+q)}{b-1}\right)
 -\Hfun_{n/b}(A,A+p+q)\right|
 \le24|A+p+q|=O(\sqrt n).
\]
Together with~\eqref{eq:direct-H-count}, this proves Lemma~\ref{lem:determinant-count}.
\end{proof}

\begin{lemma}\label{lem:H-lower}
Under the conditions~\eqref{eq:conditions},
\[
 \Hfun_{n/b}(A,A+p+q)\ge\frac{5qn}{18b}.
\]
\end{lemma}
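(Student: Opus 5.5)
The plan is to evaluate $\Hfun_{n/b}(A,A+p+q)$ with Lemma~\ref{lem:H-formula} and bound the resulting $G$-values with Lemma~\ref{lem:G}. Write $t:=n/b$, so $1\le t<4$ because $n/4<b\le n$. Put $X:=A$ and $Y:=A+p+q$. These are integers with $Y-X=p+q\ge q\ge1$. In every case the claim reduces to showing that the relevant combination of $G$-values is at least $5q/18$. I split into three cases according to the signs of $X$ and $Y$.

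\emph{Case $X\ge0$.} Here $\Hfun_{t}(X,Y)=t\bigl(G(tY)-G(tX)\bigr)$.
\begin{itemize}
\item If $X\ge1$, then $tX\ge1$. Put $k:=\lfloor t(p+q)\rfloor\ge p+q\ge q$. Monotonicity of $G$ and Lemma~\ref{lem:G}(i), applied $k$ times, give $G(tY)-G(tX)\ge 5k/18\ge 5q/18$.
\item If $X=0$, then $aq=bp$, so $a/b=p/q$ and hence $(p,q)=(a,b)$. Thus $t(p+q)\ge a+b\ge3$, and Lemma~\ref{lem:G}(ii) gives $G(tY)\ge 5t(p+q)/18\ge 5q/18$.
\end{itemize}

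\emph{Case $Y\le0$.} Here $\Hfun_t(X,Y)=t\bigl(G(-tX)-G(-tY)\bigr)$, and the argument is symmetric.
\begin{itemize}
\item If $Y\le-1$, apply Lemma~\ref{lem:G}(i) starting at $-tY\ge1$ with $k=\lfloor t(p+q)\rfloor$ steps, exactly as above.
\item If $Y=0$, then $(a+1)q=(b-1)p$, so $p/q=(a+1)/(b-1)$ in lowest terms. If $p+q\ge3$, Lemma~\ref{lem:G}(ii) gives $G(t(p+q))\ge 5t(p+q)/18\ge5q/18$. The only remaining possibility is $p/q=1/1$. Then $G(2t)\ge t/2\ge 5/18=5q/18$ by the $x/4$ bound.
\end{itemize}

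\emph{Case $X<0<Y$.} Here $\Hfun_t(X,Y)=t\bigl(G(tu)+G(tv)\bigr)$, where $u:=-X\ge1$, $v:=Y\ge1$, and $u+v=p+q$. This is the only place where the hypotheses $a\le b-2$ and $b-2a\notin\{1,2\}$ are used, and I expect it to be the main obstacle. The difficulty is that the crude bound $G(x)\ge\frac5{18}(x-1)$ from Lemma~\ref{lem:G}(iii) (with $x=0$), applied to both terms, gives only $\frac5{18}(p+q-2)$. That is short of $\frac5{18}q$ when $p\le1$.
\begin{itemize}
\item First rule out the bad small configurations. If $p=0$, then $q=1$ and $u+v=1$, which is impossible. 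If $p/q=1/1$, then $u=v=1$ forces $A=a-b=-1$, contradicting $a\le b-2$. If $p/q=1/2$, then $A=2a-b$; so $u=1$ means $b-2a=1$, and $v=1$ means $A+3=1$, i.e.\ $b-2a=2$, both excluded. For $p/q=1/2$ with $u,v\ge2$, the $x/4$ bound gives $G(tu)+G(tv)\ge t(u+v)/4\ge 3/4\ge 5q/18$.
\item If $\max(tu,tv)\ge3$, say $tu\ge3$, use Lemma~\ref{lem:G}(ii) on that term and (iii) on the other. This gives $\frac5{18}\bigl(t(u+v)-1\bigr)\ge\frac5{18}(p+q-1)\ge\frac5{18}q$, since $p\ge1$.
\item If both $tu,tv<3$, then $u,v\le2$, so $p+q\le4$. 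After the exclusions above, this leaves $p/q=1/3$ with $u=v=2$. Then $G(2t)\ge t/2$ twice gives $t\ge1\ge 15/18$.
\end{itemize}

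Collecting the three cases gives $\Hfun_{n/b}(A,A+p+q)\ge 5qn/(18b)$.
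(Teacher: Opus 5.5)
Your proof is correct and is essentially the paper's argument: you reduce to $G$-values via Lemma~\ref{lem:H-formula}, bound them with Lemma~\ref{lem:G}, and use $a\le b-2$ and $b-2a\notin\{1,2\}$ to rule out the mixed-sign cases with $q\le2$, which leaves $(p,q)=(1,3)$ with $-A=A+p+q=2$, where $G(2t)\ge t/2$ suffices. The differences are only organizational: you split by the signs of $A$ and $A+p+q$ and iterate Lemma~\ref{lem:G}(i), treating $A=0$ and $A+p+q=0$ separately, whereas the paper splits by $p$ and uses Lemma~\ref{lem:G}(iii), treating $p=0$ and $p=q$ separately; also, your sub-case $p/q=1/2$ with $u,v\ge2$ is vacuous since $u+v=3$.
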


\begin{proof}
Let $t=n/b$. Since $a/b\in\F_n$, we have $t\ge1$.
Moreover, $0\le p\le q$ and $\gcd(p,q)=1$.

Suppose first that $0<p<q$.  If $A\ge0$ or
$A+p+q\le0$, then Lemmas~\ref{lem:H-formula} and~\ref{lem:G}(iii) give
\[
 \frac1t\Hfun_t(A,A+p+q)
 \ge\frac5{18}\bigl((p+q)t-1\bigr)
 \ge\frac5{18}q.
\]

Suppose that $A<0<A+p+q$.  The positive integers $-A$ and $A+p+q$ have
sum $p+q$, and
\[
 \frac1t\Hfun_t(A,A+p+q)
 =G(-At)+G((A+p+q)t).
\]
If $q=2$, then $p=1$. Since $A=2a-b$ and
$b-2a\notin\{1,2\}$, neither $A=-1$ nor $A=-2$ can occur. On the other
hand, the two positive integers have sum $3$, so one of these two values of
$A$ must occur, a contradiction. Thus $q\ge3$.

If one of $-A$ and $A+p+q$ equals $1$, the other is at
least $q$.  Hence Lemma~\ref{lem:G}(ii) gives
\[
 G(-At)+G((A+p+q)t)\ge\frac5{18}qt.
\]
If one equals $2$ and the other is at least $3$, then
Lemma~\ref{lem:G}(ii) gives
\[
 G(-At)+G((A+p+q)t)
 \ge\frac t2+\frac5{18}(p+q-2)t
 \ge\frac5{18}qt.
\]
If both integers equal $2$, then $(p,q)=(1,3)$ and
$2G(2t)\ge t>5qt/18$.  If both integers are at least $3$, then
\[
 G(-At)+G((A+p+q)t)
 \ge\frac5{18}(p+q)t
 \ge\frac5{18}qt.
\]

It remains to consider $p=0$ and $p=q$.  If $p=0$, then $q=1$ and
$A=a\ge1$.  By Lemma~\ref{lem:G}(i),
\[
 \frac1t\Hfun_t(A,A+1)
 =G((A+1)t)-G(At)
 \ge G(At+1)-G(At)\ge\frac5{18}.
\]
If $p=q$, then $p=q=1$ and $A=a-b\le-2$.  By
Lemma~\ref{lem:G}(iii),
\[
 \frac1t\Hfun_t(A,A+2)
 =G(-At)-G((-A-2)t)
 \ge\frac5{18}(2t-1)\ge\frac5{18}.
\]
This completes the proof.
\end{proof}

\begin{proof}[\textbf{\textup{Proof of Theorem~\ref{thm:general}}}]
Suppose first that $b\le n/4$.  Since
\[
 \frac{a+1}{b-1}-\frac ab=\frac{a+b}{b(b-1)}>\frac4n,
\]
the two estimates in Theorem~\ref{thm:dress} give
\[
 \left|\F_n\cap\left(\frac ab,\frac{a+1}{b-1}\right)\right|
 \ge\left|\F_n\cap\left(0,\frac{a+1}{b-1}\right]\right|
  -\left|\F_n\cap\left(0,\frac ab\right]\right|-1
 >\frac{2\Phi(n)}n-1.
\]
By Lemma~\ref{lem:totient},
$\Phi(n)>2n^2/7$, and hence the last quantity is greater than $4n/7-1$,
which suffices.

Now suppose that $b>n/4$, and let $p/q$ and $A$ be as in
\eqref{eq:conditions}. Lemma~\ref{lem:H-lower} gives
\[
 \frac bq\Hfun_{n/b}(A,A+p+q)
 \ge\frac bq\cdot\frac{5qn}{18b}=\frac{5n}{18}.
\]
Combining this inequality with Lemma~\ref{lem:determinant-count} proves the
theorem.
\end{proof}

\section{The remaining cases}\label{sec:4}

\begin{thm}\label{thm:central-cases}
Let $m\ge3$, $r\in\{0,1,2,3\}$, and $n=4m+r$.  Let $a/b\in\F_n$
satisfy $1\le a\le b-2$ and $b-2a\in\{1,2\}$.  If $r=3$, suppose in
addition that $a/b\ne(2m+1)/(4m+3)$.  Then
\[
 \left|\F_n\cap\left(\frac ab,\frac{a+1}{b-1}\right)\right|
 \ge U(n).
 \]
\end{thm}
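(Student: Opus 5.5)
The plan is to count the fractions exactly. Both intervals in question contain $1/2$. When $b=2a+1$ the interval is $(a/(2a+1),(a+1)/(2a))$. When $b=2a+2$, $a$ must be odd and the interval is $(a/(2a+2),(a+1)/(2a+1))$. This suggests running the argument of Lemma~\ref{lem:determinant-count} with $p/q=1/2$ fixed, but with exact integer counting instead of $O(\sqrt n\log n)$ error terms. For a reduced fraction $x/y$ put $h:=2x-y$, so that $x=(y+h)/2$ with $y\equiv h\pmod 2$, and $\gcd(x,y)=\gcd\bigl((y+h)/2,y\bigr)$.

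\textbf{Step 1: translate the endpoints into conditions on $(h,y)$.} A direct computation turns the endpoint conditions into linear constraints. In the case $b=2a+1$, the inequality $x/y>a/(2a+1)$ becomes $(2a+1)h>-y$, and $x/y<(a+1)/(2a)$ becomes $ah<y$. In the case $b=2a+2$ the constraints are analogous, namely $(2a+2)h>-y$ and $(2a+1)h<y$. Hence the fractions in the interval are exactly $1/2$ together with, for each $h\neq0$, the integers $y\le n$ with $y\equiv h\pmod2$ and $y>|h|\cdot(\text{a fixed multiple of }a)$, subject to the coprimality condition above. For $h=\pm1$ coprimality is automatic, so these layers contribute exactly the odd $y$ in a tail interval. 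This gives roughly $(n-a)/2$ terms from $h=1$ and roughly $(n-2a)/2$ terms from $h=-1$, each an explicit floor expression. For $|h|=2,3,\dots$ the counts are again explicit: $y$ runs over a residue class modulo $2|h|$ or $|h|$, with a finite set of residues excluded by the coprimality condition.

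\textbf{Step 2: split according to the size of $b$.} If $b\le n/4$, I copy the first part of the proof of Theorem~\ref{thm:general}: Theorem~\ref{thm:dress} and Lemma~\ref{lem:totient} give more than $4n/7-1\ge U(n)$ fractions once $m\ge3$. If $b>n/4$, then $a\ge n/8-1$, so only the layers with $|h|\le 8$ (or so) can be nonempty. I then write the total count as an explicit sum of floor functions of $n$ and $a$. The $h=1$ layer alone gives $\lfloor (n-a+1)/2\rfloor$-type counts, which is at least $m$ whenever $a\le n/2-O(1)$. The extremal situation is $a$ as large as allowed, that is $b\in\{n,n-1\}$ and $a\approx n/2$. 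There the $h=1$ and $h=-1$ layers together give about $n/4$ terms and the higher layers give nothing or $O(1)$ terms. I will compute this near-extremal configuration exactly in each residue class $r$, obtaining a count of the form $m+c_r$, and check that $c_r\ge U(n)-m$, i.e.\ $c_r=1,2,2,4$ for $r=0,1,2,3$. The excluded fraction $(2m+1)/(4m+3)$ for $r=3$ should be precisely the configuration where this computation gives only $m+3$, so it must be set aside.

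\textbf{Main obstacle.} The hard part is the intermediate range $n/8\lesssim a\lesssim n/2$, where the bound must be exact rather than asymptotic. Here the $h=\pm1$ layers shrink linearly in $a$ while the $h=\pm2,\pm3,\dots$ layers compensate only in discrete jumps. I would first try to show that the combined $h=\pm1$ count alone, which is roughly $n-\tfrac32a$ minus parity corrections, already exceeds $U(n)$ except when $a>n/2-O(1)$. That leaves finitely many values $a=\lfloor n/2\rfloor-k$, $k=O(1)$, near the top. For each of these I will check all parities of $a$ and $n$ by explicit floor arithmetic, adding the $|h|=2,3$ layers wherever the $h=\pm1$ layers fall short. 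The condition $m\ge3$ should be exactly what makes these residue-class computations valid uniformly in $n$.
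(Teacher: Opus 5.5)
Your decomposition by $h=2x-y$ (Lemma~\ref{lem:determinant-count} with $p/q=1/2$ and exact counting) is sound, but one of your Step~1 constraints is wrong, and the error sits exactly where most of the count lies. For $b=2a+2$ the left endpoint is $\frac a{2a+2}=\frac12-\frac1{2a+2}$. So $x/y>a/b$ is equivalent to $(a+1)h>-y$, not $(2a+2)h>-y$. For instance, $1/3$ (with $h=-1$, $y=3$) lies in $(1/4,2/3)$ but violates your inequality.

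With your version, at the top configuration $a/b=(2m-1)/(4m)$, $n=4m$, every layer other than $h=0$ is empty. You would then get the count $1$, whereas the true count is $m+1$. In this case the two layers swap roles relative to $b=2a+1$. The $h=-1$ layer consists of the odd $y\in(a+1,n]$, giving about $(n-a)/2$ fractions $s/(2s+1)$. The $h=1$ layer consists of the odd $y\in(2a+1,n]$.

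Once this is corrected your plan goes through, but it is much heavier than the paper's proof, and the ``main obstacle'' is illusory. The paper splits into three families: $b-2a=2$; $b-2a=1$ with $a$ even; $b-2a=1$ with $a$ odd. Within each family the intervals $(a/b,(a+1)/(b-1))$ are nested and shrink as $a$ grows, so only the largest admissible $a$ matters:
\begin{itemize}
\item $a/b=(2m-1)/(4m)$;
\item $a/b=2t/(4t+1)$, with $t=m-1$ for $r=0$ and $t=m$ otherwise;
\item $a/b=(2m-1)/(4m-1)$, where the hypothesis $a/b\ne(2m+1)/(4m+3)$ is used when $r=3$.
\end{itemize}
For these the paper just lists $1/2$ and enough fractions $s/(2s\pm1)$, which are precisely your layers $h\in\{0,\pm1\}$.

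In your language, each $h=\pm1$ count is the number of odd $y\le n$ above a threshold ($a$, $a+1$ or $2a+1$) that increases with $a$, so it is nonincreasing in $a$. At the top, the layers $h\in\{0,\pm1\}$ already give at least $U(n)$. The bound is attained exactly, e.g., for $r=0$ with $a/b=(2m-1)/(4m)$, and for $r\in\{1,2,3\}$ with $a/b=2m/(4m+1)$. Consequently none of the following are needed: the Theorem~\ref{thm:dress}/Lemma~\ref{lem:totient} argument for $b\le n/4$, the layers $|h|\ge2$, and the floor arithmetic over $a=\lfloor n/2\rfloor-k$. What your route buys in return is an exact formula for the count, which the statement does not require.

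A minor point: the excluded fraction $(2m+1)/(4m+3)$ yields $m+2$ fractions, namely $1/2$ and $s/(2s-1)$ for $m+2\le s\le 2m+2$, not $m+3$. This does not matter here, since that case is excluded by hypothesis.
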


\begin{proof}
Suppose first that $b-2a=2$.  Since $a/b$ is reduced, $a$ is odd, so
\[
 \frac ab=\frac{2t-1}{4t}
 \quad\text{and}\quad
 \left(\frac ab,\frac{a+1}{b-1}\right)
 =I_t:=\left(\frac{2t-1}{4t},\frac{2t}{4t-1}\right)
\]
for some $1\le t\le m$. 
Then $I_m\subseteq I_{m-1}\subseteq\cdots\subseteq I_1$.
For every positive integer $s$, $s/(2s+1)>(2m-1)/(4m)$ if and only if $s\geq m$.
Consider the following table:
\[
{\renewcommand{\arraystretch}{1.15}
\begin{array}{c|l|c}
r&\text{fractions}&\text{cardinality}\\[1mm]\hline
0&\left\{\dfrac{s}{2s+1}:m\le s\le2m-1\right\}
 \cup\left\{\dfrac12\right\}&m+1\\[4mm]
1,2&\left\{\dfrac{s}{2s+1}:m\le s\le2m\right\}
 \cup\left\{\dfrac12\right\}&m+2\\[4mm]
3&\left\{\dfrac{s}{2s+1}:m\le s\le2m+1\right\}
 \cup\left\{\dfrac12,\dfrac{2m+1}{4m+1}\right\}&m+4
\end{array}
}
\]
For each $r\in \{0,1,2,3\}$, each corresponding set in the second column is contained in $\F_n\cap I_m$, and hence contained in $\F_n\cap (\frac ab,\frac{a+1}{b-1})$. This gives the desired bound.

Suppose next that $b-2a=1$ and $a$ is even.  Then
\[
 \frac ab=\frac{2t}{4t+1}
 \quad\text{and}\quad
 \left(\frac ab,\frac{a+1}{b-1}\right)
 =J_t:=\left(\frac{2t}{4t+1},\frac{2t+1}{4t}\right)
\]
for some $t\ge1$, and $J_{t+1}\subseteq J_t$.  If $r=0$, then $t\le m-1$,
and $s/(2s-1)<(2m-1)/(4m-4)$ if and only if $s\ge m$.  If
$r\in\{1,2,3\}$, then $t\le m$, and
$s/(2s-1)<(2m+1)/(4m)$ if and only if $s\ge m+1$. The following table gives sufficient fractions in $\F_n\cap (\frac ab,\frac{a+1}{b-1})$ as required.
\[
{\renewcommand{\arraystretch}{1.15}
\begin{array}{c|l|c}
r&\text{fractions}&\text{cardinality}\\[1mm]\hline
0&\left\{\dfrac{s}{2s-1}:m\le s\le2m\right\}&m+1\\[4mm]
1,2&\left\{\dfrac{s}{2s-1}:m+1\le s\le2m+1\right\}
 \cup\left\{\dfrac12\right\}&m+2\\[4mm]
3&\left\{\dfrac{s}{2s-1}:m+1\le s\le2m+2\right\}
 \cup\left\{\dfrac12,\dfrac{2m+1}{4m+3}\right\}&m+4
\end{array}
}
\]

It remains to consider $b-2a=1$ with $a$ odd.  In this case
\[
 \frac ab=\frac{2t+1}{4t+3}
 \quad\text{and}\quad
 \left(\frac ab,\frac{a+1}{b-1}\right)
 =K_t:=\left(\frac{2t+1}{4t+3},\frac{t+1}{2t+1}\right)
\]
for some $t\ge0$, and $K_{t+1}\subseteq K_t$. 
Since the denominator $4t+3\leq n$, we have $t\leq m-1$ when $r\in \{0,1,2\}$; and by the additional
hypothesis, this also holds for $r=3$.
Since $s/(2s-1)<m/(2m-1)$ if and only if $s\ge m+1$, the following table provides the desired fractions.
\[
{\renewcommand{\arraystretch}{1.15}
\begin{array}{c|l|c}
r&\text{fractions}&\text{cardinality}\\[1mm]\hline
0&\left\{\dfrac{s}{2s-1}:m+1\le s\le2m\right\}
 \cup\left\{\dfrac12\right\}&m+1\\[4mm]
1,2&\left\{\dfrac{s}{2s-1}:m+1\le s\le2m+1\right\}
 \cup\left\{\dfrac12\right\}&m+2\\[4mm]
3&\left\{\dfrac{s}{2s-1}:m+1\le s\le2m+2\right\}
 \cup\left\{\dfrac{2m}{4m+1},\dfrac{2m+1}{4m+3}\right\}&m+4
\end{array}
}
\]
This completes the proof of Theorem~\ref{thm:central-cases}.
\end{proof}

\begin{proof}[\textbf{\textup{Proof of Theorem~\ref{thm:main}}}]
Let $n=4m+r$, where $0\le r\le3$, and assume that $m$ is sufficiently
large.  By \eqref{eq:f-definition}, choose $0\le i<j<|\F_n|$ such that
\[
 (a_j-a_i)(b_j-b_i)<0
 \quad\text{and}\quad
 j-i-1=f(n).
\]
The upper bounds in Theorem~\ref{thm:vandoorn-upper} imply
$f(n)\le n/4+4$.

Write $a/b=a_i/b_i$ and $c/d=a_j/b_j$.  Since $a/b<c/d$, the inequality
$(c-a)(d-b)<0$ implies $c\ge a+1$ and $d\le b-1$, and hence $a+1\le c\le d\le b-1$. In particular, we have $1\le a\le b-2$.  Moreover, 
\[
 \left|\F_n\cap\left(\frac ab,\frac{a+1}{b-1}\right)\right|\leq \left|\F_n\cap\left(\frac ab,\frac cd\right)\right|\le f(n).
\]

Suppose first that $b-2a\notin\{1,2\}$. Then Theorem~\ref{thm:general} gives
\[
 f(n)\ge\left|\F_n\cap
 \left(\frac ab,\frac{a+1}{b-1}\right)\right|
 \ge\frac{5n}{18}-O(\sqrt n\log n)
 >\frac n4+4
\]
for all sufficiently large $n$, a contradiction.  Hence
$b-2a\in\{1,2\}$.  By Theorem~\ref{thm:central-cases}, we have $f(n)\ge m+1$ for $r=0$, $f(n)\ge m+2$ for
$r\in\{1,2\}$, and $f(n)\ge m+4$ for $r=3$, except for the case that $r=3$ and
$a/b=(2m+1)/(4m+3)$.

It remains to assume $r=3$ and $a/b=(2m+1)/(4m+3)$. In this case, we have $c\ge2m+2$ and $d\le4m+2$, and they cannot both attain equality, since $\gcd(c,d)=1$.
We thus have
\begin{equation*}
 \frac cd\geq \min\{\frac{2m+2}{4m+1},\frac{2m+3}{4m+2}\}>\frac{2m+1}{4m}.
\end{equation*}

In this case, we have 
\begin{align*}
f(n)&\ge\left|\F_n\cap
 \left(\frac{2m+1}{4m+3},\frac cd\right)\right|\geq \left|\F_n\cap
 \left(\frac{2m+1}{4m+3},\frac{2m+1}{4m}\right]\right|\\
 &\geq \left|\{\frac{1}{2},\frac{2m+1}{4m}\}\cup\{\frac{s}{2s-1}:m+1\leq s\leq 2m+2\}\right|=m+4.
\end{align*}
This proves that the bounds in Theorem~\ref{thm:vandoorn-upper} are also lower bounds for $f(n)$, which completes the proof of Theorem~\ref{thm:main}.
\end{proof}

\section{Further remarks}\label{sec:5}

\subsection*{Proof sketch of Theorem~\ref{thm:all-n}}

For $4\le n\le5000$, direct enumeration of $\F_n$ gives the values stated
in Theorem~\ref{thm:all-n}.  Let $n\ge5000$.  The reduction in the proof of
Theorem~\ref{thm:main} already treats $b\le n/4$ and
$b-2a\in\{1,2\}$.  It therefore remains to consider
\[
 b>\frac n4,\qquad b-2a\notin\{1,2\}.
\]
For such $a/b$, let
\[
 C_n(a,b):=\left|\F_n\cap
 \left(\frac ab,\frac{a+1}{b-1}\right)\right|.
\]

For $n\ge5504798$, the main term in the proof of
Lemma~\ref{lem:determinant-count} is at least $5n/18$, while the total error
is at most
\[
 (9\lfloor\sqrt n\rfloor+3)
 \left(\frac{61}{100}\log(9\lfloor\sqrt n\rfloor+3)+\frac76\right).
\]
For every $n\ge5504798$, this quantity is smaller than $n/36-4$.  Hence
$C_n(a,b)>n/4+4\ge U(n)$.

It remains to consider $5000\le n\le5504797$.  For each $h\ne0$, the proof
of Lemma~\ref{lem:determinant-count} uses
\[
 |N_h|=
 \frac{\varphi(|h|)}{|h|}\operatorname{length}(I_h)
 +O\left(\sum_{d\mid |h|}1\right).
\]
We replace this $O$-term by the following upper bound for the actual error:
\[
 \sup_{\substack{J\text{ an interval}\\
                  \operatorname{length}(J)\le |h|}}
 \left|
 \left|\{j\in J\cap\mathbb Z:\gcd(j,|h|)=1\}\right|
 -\frac{\varphi(|h|)}{|h|}\operatorname{length}(J)
 \right|.
\]
By periodicity modulo $|h|$, the supremum is determined by one period.  For
a chosen Dirichlet approximation $p/q$, with $A=aq-bp$, the main term is
\[
 \frac bq\Hfun_{n/b}\left(A,\frac{b(A+p+q)}{b-1}\right),
\]
and the error is bounded by the sum of the preceding quantities over all
$h\ne0$ for which $I_h\ne\emptyset$.

We also use
\[
 G(x+1)-G(x)\ge\frac27\quad(x\ge1),\qquad
 G(x+1)-G(x)\ge\frac3{10}\quad(x\ge41),
\]
and
\[
 G(x+q)-G(x)\ge
 \max\left\{\frac{2q}{7},
 \frac{21273q-22546}{70000}\right\}
 \qquad(x\ge1).
\]

For $(p,q,A)\in\{(1,3,-1),(1,3,-3)\}$, the required bound is verified
directly by the program; in all other cases, we divide
$5000\le n\le 5504797$ into the following three ranges.
\begin{itemize}
\item For $5000\le n\le83387$, let
$R=\lfloor\sqrt{n/3}\rfloor$ and choose a Dirichlet approximation $p/q$
with the smallest possible denominator.  For every possible $p/q$, the
program uses the inequalities implied by this minimality and
\eqref{eq:H-formula} to verify that the main term minus the total error is
at least $U(n)$.

\item For $83388\le n\le181854$, let
$R=\lfloor\sqrt{n/8}\rfloor$.  For every $q\le R$, the main term is at least
\[
 \frac nq\max\left\{\frac{2q}{7},
 \frac{21273q-22546}{70000}\right\}.
\]
The program verifies that the total error is smaller than
\[
 \frac nq\max\left\{\frac{2q}{7},
 \frac{21273q-22546}{70000}\right\}-\frac n4-4.
\]

\item For $181855\le n\le5504797$, use the same value
$R=\lfloor\sqrt{n/8}\rfloor$.  The main term is at least $2n/7$.  The upper
bound for the total error is nondecreasing in $q$, so the program verifies
only at $q=R$ that the total error is smaller than
\[
 \frac n{28}-4.
\]
\end{itemize}

Together with the estimate for $n\ge5504798$, the enumeration for
$n\le5000$, and Theorem~\ref{thm:vandoorn-upper}, this proves
Theorem~\ref{thm:all-n}.

\subsection*{Code availability}
All code used to verify the numerical results in this paper, including~\eqref{eq:totient} and the proof of Theorem~\ref{thm:all-n}, is available in a public GitHub repository at \url{https://github.com/dct-cell/erdos/tree/main/1005}.

\subsection*{AI-use declaration}
During the preparation of this work, the authors used ChatGPT-5.6 to assist in generating candidate
proof strategies. All AI-generated
suggestions were verified and refined by the authors, who take full responsibility for the final content of the paper.

\end{document}